\documentclass[11pt]{amsart}

\usepackage[T1]{fontenc}
\usepackage{lmodern}
\usepackage{microtype}
\usepackage{amssymb}
\usepackage{mathtools}
\usepackage[sort,compress]{cite}
\usepackage{url}
\usepackage[hidelinks]{hyperref}

\hypersetup{
  pdftitle={Central Splitting, Division-Stable Residuals, and Opposite-Ring Transfer for Strongly C4*-Rings},
  pdfauthor={Chandrasekhar Gokavarapu, Naveen Kumar Kakumanu, Sajani Lavanya Madasi, Sudakar Gadde, and Siri Katakam},
  pdfsubject={Rings and Algebras},
  pdfkeywords={C4*-ring, strongly C4*-ring, central idempotent, opposite ring, division-stable residual}
}

\theoremstyle{plain}
\newtheorem{theorem}{Theorem}[section]
\newtheorem{proposition}[theorem]{Proposition}
\newtheorem{lemma}[theorem]{Lemma}
\newtheorem{corollary}[theorem]{Corollary}

\theoremstyle{definition}
\newtheorem{definition}[theorem]{Definition}
\newtheorem{example}[theorem]{Example}

\theoremstyle{remark}
\newtheorem{remark}[theorem]{Remark}

\numberwithin{equation}{section}

\newcommand{\Cfour}{\mathrm{C4}}
\newcommand{\Cfourstar}{\mathrm{C4}^{\ast}}
\newcommand{\Hom}{\operatorname{Hom}}

\newcommand{\op}{\mathrm{op}}
\newcommand{\ess}{\subseteq_{\mathrm{ess}}}
\newcommand{\ds}{\subseteq_{\oplus}}

\newcommand{\id}{\mathrm{id}}
\newcommand{\Idem}{\operatorname{Idem}}
\newcommand{\Ecal}{\mathfrak E}

\begin{document}

\title[Division-Stable Residuals and Opposite-Ring Transfer]
{Central Splitting, Division-Stable Residuals, and Opposite-Ring
Transfer for Strongly \(\Cfourstar\)-Rings}

\author{Chandrasekhar Gokavarapu}
\email{chandrasekhargokavarapu@gmail.com}
\email{chandrasekhargokavarapu@gcrjy.ac.in}
\thanks{Corresponding author: Chandrasekhar Gokavarapu.}

\author{Naveen Kumar Kakumanu}

\author{Sajani Lavanya Madasi}

\author{Sudakar Gadde}
\address{Chandrasekhar Gokavarapu, Naveen Kumar Kakumanu,
Sajani Lavanya Madasi, and Sudakar Gadde:
Department of Mathematics, Government College (Autonomous),
Rajahmundry, Andhra Pradesh 533105, India}

\author{Siri Katakam}
\address{Siri Katakam:
M.Sc.\ Mathematics Programme, 2025--2027 Batch,
Government College (Autonomous), Rajahmundry,
Andhra Pradesh 533105, India}

\renewcommand{\shortauthors}{Chandrasekhar Gokavarapu et al.}

\date{}

\begin{abstract}
The known decomposition theorem for strongly \(\Cfourstar\)-modules gives a
semisimple summand and a summand-square-free residual summand, with
two-way Hom-orthogonality when the ambient module is projective.  Applied
to the regular module, the two cross-corners vanish, so the idempotent
defining the decomposition is central.  Thus every strongly right
\(\Cfourstar\)-ring splits as \(R\cong \Sigma\times T\),
where \(\Sigma\) is semisimple artinian and \(T_T\) is
summand-square-free.

The splitting need not be unique.  We organize all admissible central
splittings into a join-semilattice and prove a comparison theorem: any
two residual factors have a common direct factor, and their complementary
factors are finite products of division rings.  Consequently the
right-to-left defect is independent of the chosen splitting.  If the
central idempotents satisfy the ascending chain condition, there is a
unique greatest admissible idempotent; it captures every central
semisimple artinian direct factor and hence yields a canonical residual
with no further such factor.  An infinite product of fields shows that
this finiteness hypothesis cannot simply be omitted.

We also prove anti-isomorphism transport for \(\Cfour\),
\(\Cfourstar\), semi-weak-CS, and strongly \(\Cfourstar\) modules.
It yields transfer whenever a residual factor is a product of a
semisimple summand-square-free ring and a self-opposite core.  This
criterion does not assume regularity, exchange, or a left-sided
hypothesis.  Skew Laurent rings provide noncommutative nonregular
examples, while the known injective-nonsurjective skew-polynomial
construction gives a sharp one-sided residual obstruction.
\end{abstract}

\keywords{\(\Cfourstar\)-ring; strongly \(\Cfourstar\)-ring;
central idempotent; opposite ring; anti-isomorphism; square-free module;
division ring; skew Laurent ring.}

\subjclass[2020]{16D70, 16D80, 16W10, 16S34}

\maketitle

\section{Introduction}

The class of \(\Cfour\)-modules was introduced as a common extension of
\(\mathrm{C3}\)-modules and square-free modules
\cite{DingIbrahimYousifZhou2017,AltunIbrahimOzcanYousif2018}.  A module is
\(\Cfourstar\) when all its submodules are \(\Cfour\).  The strong
\(\Cfourstar\) condition adds the semi-weak-CS property
\cite{IbrahimEidElGuindy2026}.  This strong class admits a decomposition
into a semisimple part and a summand-square-free part.  For regular rings
the residual part becomes strongly regular; full left--right symmetry
then follows \cite{IbrahimEidElGuindy2026}.

Regularity is not a dispensable technical assumption in that argument.
There are skew polynomial domains which are strongly \(\Cfourstar\) on
one side and not on the other
\cite[Example~4.12]{IbrahimEidElGuindy2026}.  A theorem beyond the
regular case must therefore identify a genuine symmetry mechanism and
locate exactly where one-sided behavior can survive.

\subsection{Related literature and positioning}

The conditions considered here belong to the direct-summand tradition
originating in continuous and quasi-continuous module theory
\cite{MohamedMuller1990,Lam1999}.  General decomposition methods and the
role of endomorphism rings are developed systematically in
\cite{Facchini1998,Facchini2012}.  Within this tradition, the
\(\mathrm{C3}\) and \(\mathrm{D3}\) conditions were studied in
\cite{EbrahimiAtaniKhoramdelHesari2016,YousifAminIbrahim2014}, and their
homomorphism-based extensions led to \(\Cfour\)- and
\(\mathrm{D4}\)-modules
\cite{DingIbrahimYousifZhou2017,DingIbrahimYousifZhouD42017}.
Perspective direct summands give a further description of these classes
\cite{AltunIbrahimOzcanYousif2018}, while class-relative variants of
\(\mathrm{C3}\), \(\Cfour\), and \(\mathrm{D4}\) have appeared more
recently \cite{Zhu2023,DialloDiopKourkiTribak2025}.  The
\(\Cfourstar\), semi-weak-CS, and strongly \(\Cfourstar\) notions used
below are the latest structural refinement in this line
\cite{IbrahimEidElGuindy2026}.

Two earlier decomposition results are particularly close in form but
not in content to the present theorem.  Rings all of whose right ideals
are \(U\)-modules split into a square-full semisimple artinian part and a
right square-free part \cite{IbrahimYousif2018}.  Likewise, a right
\(a\)-ring, whose right ideals are automorphism-invariant, has a
square-full semisimple artinian factor and a right square-free factor
\cite{KosanQuynhSrivastava2016}.  For a strongly \(\Cfourstar\)-module,
however, the available source decomposition has a semisimple summand and
only a \emph{summand}-square-free residual, together with two-way
Hom-orthogonality when the module is projective.  The present paper uses
that orthogonality on \(R_R\) to prove centrality of the splitting
idempotent.  It then studies the entire family of such central
splittings, proves comparison modulo finite products of division rings,
and obtains a greatest splitting under ACC on central idempotents.
These semilattice, comparison, and canonical-residual conclusions do not
follow from the cited \(U\)-module or automorphism-invariant
decompositions.

The question of comparing decompositions has a long history, from
refinement theory \cite{CrawleyJonsson1964} to modern treatments of
modules with semilocal endomorphism rings
\cite{Facchini1998,Facchini2012}.  A related but different direction
studies common complements and perspectivity
\cite{GargGroverKhurana2014}.  Strong perspectivity, exchange, and
summand-dual-square-free modules were investigated in
\cite{IbrahimYousifPerspective2022}, and the near-uniqueness of direct
complements was developed in
\cite{IbrahimYousifDirectComplements2023}.  Our comparison theorem is not
a cancellation or common-complement theorem: it compares two
\emph{central ring factors} and identifies the exact discrepancy as
finite products of division rings.

Exchange theory supplies another important neighboring framework.
Warfield's module-theoretic formulation, Nicholson's idempotent-lifting
characterization, and the clean-ring results of Camillo and Yu form the
classical background
\cite{Warfield1972,Nicholson1977,CamilloYu1994}.  Automorphism-invariant
modules also satisfy exchange and cleanness properties
\cite{GuilAsensioSrivastava2013}.  For \(\Cfour\)-modules, restricted
chain conditions connect finite exchange, full exchange, and cleanness
\cite{IbrahimYousif2022}.  Square-free and dual-square-free versions of
these phenomena are studied in
\cite{Nielsen2010,MazurekNielsenZiembowski2015,IbrahimYousif2019}.  By
contrast, our central splitting and residual comparison require no
exchange or cleanness hypothesis.  The ACC used later is imposed only on
central idempotents and only to select a greatest admissible splitting;
it is not used to establish an exchange property.

Finally, the side-changing part of the paper is distinct from
left--right symmetric perspectivity.  It transports the complete
\(\Cfourstar\) and semi-weak-CS definitions through an
anti-isomorphism, in the standard opposite-ring setting of
\cite{Lam2001,KnusMerkurjevRostTignol1998,Pierce1982}.  The nonregular
examples lie in the Ore-extension tradition
\cite{Ore1933,GoodearlWarfield2004,McConnellRobson2001}.  This separates
two issues that are sometimes conflated: symmetry coming from a
side-changing anti-isomorphism, and one-sided failure caused by a
nonsurjective twisting endomorphism.

An earlier preprint \cite{GokavarapuTransfer2026} formulated the transfer
program using auxiliary corner and annihilator hypotheses.  The present
paper supersedes that framework: the central splitting follows directly
from the two Hom-vanishings in the source decomposition theorem, and the
new results concern the semilattice, comparison, and canonicality of all
such splittings.  A separate preprint
\cite{GokavarapuMorita2026} studies covariant Morita equivalence and
matrix/full-corner permanence.  Those results are not repeated here.
The anti-isomorphism argument below is side-changing and is used only as
a tool for the residual problem.

The first mechanism in this paper is central splitting.
The decomposition theorem of \cite{IbrahimEidElGuindy2026}, when applied
to \(R_R\), supplies
\[
R_R=P\oplus Q,
\]
with \(P\) semisimple and \(Q\) summand-square-free.  Since \(R_R\) is
projective, the same theorem gives
\[
\Hom_R(P,Q)=0=\Hom_R(Q,P).
\]
Writing \(P=eR\) and \(Q=(1-e)R\), these two equalities are precisely
\[
(1-e)Re=0=eR(1-e).
\]
Consequently \(e\) is central.  The module decomposition is therefore a
ring decomposition.  This yields
\[
R\cong eR\times(1-e)R
\]
with \(eR\) semisimple artinian and \((1-e)R\) the residual
summand-square-free ring.  No annihilator closure and no additional
splitting axiom is required.

Centrality raises a question that is invisible at the level of a single
decomposition: how different can two residual factors be?  For a
strongly right \(\Cfourstar\)-ring \(R\), let \(\Ecal(R)\) be the set of
central idempotents \(e\) for which \(eR\) is semisimple artinian and
\((1-e)R\) is summand-square-free.  We prove that \(\Ecal(R)\) is closed
under finite joins.  For \(e,f\in\Ecal(R)\), the four central corners give
\[
\begin{aligned}
(1-e)R&\cong (1-e)fR\times(1-e)(1-f)R,\\
(1-f)R&\cong e(1-f)R\times(1-e)(1-f)R.
\end{aligned}
\]
The two unequal factors are simultaneously semisimple and
summand-square-free, hence finite products of division rings.  Thus
residuals are unique modulo division-ring factors.  Under ACC on central
idempotents, finite-join closure produces a unique greatest admissible
idempotent.  The resulting residual is canonical, and every other
residual is its product with finitely many division rings.

The second mechanism is opposite-ring transport.  An anti-isomorphism
\(\alpha:R\to S\) identifies the submodule lattice of a right
\(R\)-module with the submodule lattice of a left \(S\)-module.  It
preserves kernels, images, direct summands, essential submodules,
semisimplicity, and the chains occurring in the semi-weak-CS condition.
We prove that it therefore preserves and reflects the \(\Cfour\),
\(\Cfourstar\), semi-weak-CS, and strongly \(\Cfourstar\) properties.
At ring level,
\[
R\text{ strongly right }\Cfourstar
\quad\Longleftrightarrow\quad
S\text{ strongly left }\Cfourstar.
\]

Combining the comparison and transport mechanisms gives a criterion
strictly more flexible than self-oppositeness of an entire residual.  If
\[
T\cong D\times U,
\]
where \(D\) is a finite product of division rings and
\(U\cong U^{\op}\), then \(T\), and hence \(R\), transfers from right to
left.  No anti-isomorphism is required on \(D\).  Skew Laurent rings
provide nonregular choices for \(U\).

The paper also gives an exact reduction of counterexamples.  If \(R\) is
strongly right but not strongly left \(\Cfourstar\), every admissible
residual has the same defect.  The comparison theorem shows that this is
independent of the decomposition, not merely true one splitting at a
time.

Section~\ref{sec:preliminaries} fixes the definitions and input from the
existing theory.  Section~\ref{sec:central} proves central splitting.
Section~\ref{sec:comparison} develops admissible splittings, comparison,
and canonicality.  Section~\ref{sec:anti} establishes side-changing
transport.  Section~\ref{sec:transfer} proves the division-stable
transfer criterion.  Section~\ref{sec:examples} gives nonregular
families, and Section~\ref{sec:obstruction} locates one-sided defects.

\section{Preliminaries}\label{sec:preliminaries}

All rings are associative with identity.  Modules are unitary.  We write
\(N\ds M\) when \(N\) is a direct summand of \(M\), and
\(N\ess M\) when \(N\) is essential in \(M\).  Standard facts concerning
idempotents, direct summands, opposite rings, and skew extensions may be
found in
\cite{Lam1999,Lam2001,GoodearlWarfield2004,McConnellRobson2001}.

\begin{definition}
A right \(R\)-module \(M\) is a \(\Cfour\)-module if, whenever
\[
M=A\oplus B
\]
and \(f:A\to B\) is an \(R\)-homomorphism with
\(\ker f\ds A\), one has \(\operatorname{Im}f\ds B\).
A module \(M\) is a \(\Cfourstar\)-module if every submodule of \(M\) is
a \(\Cfour\)-module.
\end{definition}

\begin{definition}
Let \(M\) be a module.  Denote by \(\mathcal S(M)\) the set of triples
\((X,Y,f)\) in which \(X,Y\ds M\) are semisimple,
\(X\cap Y=0\), and \(f:X\to Y\) is an isomorphism.  The order is
extension in each coordinate.  The module \(M\) is \emph{semi-weak-CS}
if, for every chain in \(\mathcal S(M)\), the unions of its first and
second coordinates are essential in direct summands of \(M\).
It is \emph{strongly \(\Cfourstar\)} if it is both
\(\Cfourstar\) and semi-weak-CS.
\end{definition}

A ring \(R\) is a right \(\Cfourstar\)-ring, respectively a strongly
right \(\Cfourstar\)-ring, when \(R_R\) has the indicated property.
The left-handed notions are defined using \({}_RR\).

\begin{definition}
A module \(M\) is \emph{square-free} if it contains no nonzero submodules
\(A,B\) such that \(A\cap B=0\) and \(A\cong B\).  It is
\emph{summand-square-free} if the same prohibition is imposed only on
direct summands \(A,B\ds M\).
\end{definition}

The distinction is essential: summand-square-freeness is strictly weaker
than square-freeness in general.  Square-free and exchange phenomena are
studied in
\cite{Nielsen2010,MazurekNielsenZiembowski2015}.  We shall use the
following facts from \cite{IbrahimEidElGuindy2026}.

\begin{proposition}\label{prop:squarefree-strong}
Every square-free module is a \(\Cfourstar\)-module and is
semi-weak-CS.  Hence every square-free module is strongly
\(\Cfourstar\).
\end{proposition}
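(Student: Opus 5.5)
I will prove the three assertions of Proposition~\ref{prop:squarefree-strong} in turn, using only the definitions of \(\Cfour\), semi-weak-CS, and square-free.

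\emph{Square-free implies \(\Cfourstar\).} Square-freeness passes to submodules: a submodule \(N\) of \(M\) containing nonzero \(A,B\) with \(A\cap B=0\) and \(A\cong B\) would give such a pair in \(M\). So it suffices to show that a square-free module \(N\) is \(\Cfour\). Let \(N=A\oplus B\) and \(f:A\to B\) with \(\ker f\ds A\), say \(A=\ker f\oplus A'\). Then \(f|_{A'}\) is injective, so \(A'\cong f(A')=\operatorname{Im}f\subseteq B\). Since \(A'\cap \operatorname{Im}f\subseteq A\cap B=0\), square-freeness forces \(A'=0\). Hence \(\operatorname{Im}f=0\), which is trivially a summand of \(B\).

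\emph{Square-free implies semi-weak-CS.} Take any \((X,Y,f)\in\mathcal S(M)\). Then \(X\cap Y=0\) and \(X\cong Y\), so square-freeness gives \(X=Y=0\). Thus \(\mathcal S(M)=\{(0,0,0)\}\), and every chain consists of this single triple. The unions of its coordinates are \(0\), which is essential in the summand \(0\) of \(M\). Combining the two parts yields the strong \(\Cfourstar\) property.

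Everything here is routine. The only point needing care is the first part: \(\operatorname{Im}f\) must be recognised as an isomorphic copy of the complement \(A'\) of \(\ker f\), and that copy must meet \(A'\) trivially. This is exactly where the decomposition \(A\oplus B\) is used.
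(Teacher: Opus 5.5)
Your proof is correct. The paper gives no argument of its own here: the proposition is imported from \cite{IbrahimEidElGuindy2026} as one of the facts taken from that source.

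Your route is a self-contained verification directly from the definitions in Section~\ref{sec:preliminaries}. It shows that both conditions hold degenerately. In a square-free module, every map \(f:A\to B\) with \(M=A\oplus B\) and \(\ker f\ds A\) is zero, and \(\mathcal S(M)=\{(0,0,0)\}\). The empty chain, if admitted, has union \(0\) by the usual convention, so it changes nothing.

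Your approach has two advantages. First, it makes the later uses of the proposition independent of the external source (Lemma~\ref{lem:uniform-squarefree}, Corollary~\ref{cor:regular-recovery}, Example~\ref{ex:infinite-product}). Second, your semi-weak-CS step only uses that triples in \(\mathcal S(M)\) satisfy \(X\cap Y=0\) and \(X\cong Y\). So it works whether or not \(X,Y\) are required to be direct summands.

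What the citation buys is fidelity to the source's exact formulation of semi-weak-CS. With the definition as stated in this paper, your verification is complete.
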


\begin{theorem}[Strong decomposition theorem]\label{thm:source-decomp}
Let \(M\) be a strongly \(\Cfourstar\)-module.  Then
\[
M=P\oplus Q,
\]
where \(P\) is semisimple, \(Q\) is summand-square-free, and
\[
\Hom_R(X,Y)=0
\qquad (X\leq P,\;Y\leq Q).
\]
If \(M\) is projective, then also
\[
\Hom_R(Y,X)=0
\qquad (Y\leq Q,\;X\leq P).
\]
\end{theorem}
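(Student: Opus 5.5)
The plan is to build \(P\) by Zorn's lemma using the semi-weak-CS condition, take \(Q\) to be a complement, and then use the \(\Cfourstar\) condition to rule out every way in which \(Q\) could still share an isomorphism type with \(P\). Each orthogonality claim will be proved by contradiction: a nonzero map of the forbidden kind produces a strictly larger element of \(\mathcal S(M)\), contradicting maximality. I expect the second step (showing \(P\) is semisimple) to be the real obstacle.

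\textbf{Step 1 (maximal triple).} The poset \(\mathcal S(M)\) is nonempty, since it contains \((0,0,0)\). The union of a chain of isomorphisms \(f_i:X_i\to Y_i\) is again an isomorphism, and the union of the coordinates is semisimple. The semi-weak-CS hypothesis says exactly that \(\bigcup X_i\) and \(\bigcup Y_i\) are essential in direct summands of \(M\). Granting that \(\bigcup X_i\) and \(\bigcup Y_i\) are themselves summands (which is part of Step 2), Zorn's lemma gives a maximal triple \((X,Y,f)\). I would then take for \(P\) a summand in which \(X\oplus Y\) is essential. Note that in \(P\) every simple isomorphism type occurring at all occurs at least twice.

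\textbf{Step 2 (\(P\) is semisimple and a summand).} I would apply the \(\Cfour\) condition to submodules of the form \(X'\oplus Y'\) with the graph-type maps induced by \(f\). The aim is to show that a summand which is essential over a semisimple module carrying such a "doubling" isomorphism must equal it. This is the step I expect to be the main obstacle. My first attempt would use the fact that \(\Cfourstar\) forces every submodule to be \(\Cfour\). Applying this to \(X_1\oplus Y_1\), where \(X_1,Y_1\) are the summand-closures, together with the isomorphism \(f\) extended along essential extensions where possible, should yield \(X_1=X\) and \(Y_1=Y\).

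Granting Step 2, fix a complement, \(M=P\oplus Q\). To see that \(Q\) is summand-square-free, suppose \(A,B\ds Q\) are nonzero with \(A\cap B=0\) and \(A\cong B\). I would then pass to simple summands inside \(A\) and \(B\), which I would try to extract via \(\Cfour\) applied inside \(A\oplus B\). Adjoining a pair of such isomorphic simple summands to \((X,Y,f)\) strictly enlarges the triple, contradicting maximality.

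\textbf{Step 3 (the orthogonality \(\Hom_R(X,Y)=0\) for \(X\le P\), \(Y\le Q\)).} Suppose \(g:X\to Y\) is a nonzero map with \(X\le P\) and \(Y\le Q\). Its image is semisimple, so it contains a simple \(S\le Q\) isomorphic to some simple \(T\ds P\). Since \(T\) has a second copy \(T'\) in \(P\), the \(\Cfour\) condition applied to \(T'\oplus(T\oplus Q)\) with the induced map should make \(S\) a summand of \(Q\). Adjoining \(S\) paired with a copy in \(P\) would then enlarge the maximal triple, a contradiction.

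\textbf{Step 4 (the reverse orthogonality when \(M\) is projective).} Suppose \(h:Y\to X\) is a nonzero map with \(Y\le Q\) and \(X\le P\). Then \(h\) has a simple image \(T\) whose isomorphism type occurs in \(P\). Because \(P\ds M\) and \(M\) is projective, \(T\) is projective. Using \(\Cfourstar\) to lift \(h\) to a map on \(Q\) with simple image, the surjection onto \(T\) splits, so \(Q\) has a direct summand isomorphic to \(T\). This reduces to the situation of Step 3 and gives the same contradiction with maximality.
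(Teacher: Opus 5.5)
\textbf{The orthogonality claims in Steps~3 and~4 fail for the \(P\) you construct.} Take \(M=k^3\) over a field \(k\). It is projective and semisimple, hence strongly \(\Cfourstar\). Every triple in \(\mathcal S(M)\) has \(\dim X=\dim Y\le 1\), so a maximal triple gives \(P=X\oplus Y=ke_1\oplus ke_2\) and \(Q=ke_3\). Then \(\Hom_R(P,Q)\neq 0\neq\Hom_R(Q,P)\). The faulty step is ``adjoining \(S\) paired with a copy in \(P\) would enlarge the maximal triple''. The copies of that type inside \(P\) are already used in \(X\) and \(Y\), and enlarging \((X,Y,f)\) needs \emph{two} new independent isomorphic summands. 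Maximality only shows that \(Q\) has at most one simple submodule of each type occurring in \(X\). (Two such copies \(S\neq S'\) are summands of \(Q\) by \(\Cfour\) of the submodule \(T\oplus Q\), with \(T\le X\) of that type, and \((X\oplus S,\,Y\oplus S')\) would then be larger.)

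The missing idea is to absorb these unpaired copies into \(P\). Let \(Q_0\) be the sum of all simple submodules of \(Q\) whose type occurs in \(X\). It is a direct sum of pairwise non-isomorphic simples, so \(Q_0\cong T_0\) for some \(T_0\le X\). Applying \(\Cfour\) of the submodule \(T_0\oplus Q\) to the monomorphism \(T_0\to Q\) with image \(Q_0\) gives \(Q_0\ds Q\). Put \(P'=X\oplus Y\oplus Q_0\) and \(Q=Q_0\oplus Q_1\). Then no simple submodule of \(Q_1\) has a type occurring in \(P'\), and both vanishings follow:
\begin{itemize}
\item A nonzero map from a submodule of \(P'\) into \(Q_1\) would have a nonzero semisimple image of such a type.
\item If \(M\) is projective and \(h:Y'\to X'\) is nonzero, with \(Y'\le Q_1\) and \(X'\le P'\), take a simple summand \(T\) of \(h(Y')\). \(T\) is a summand of \(M\), hence projective, so \(Y'\to T\) splits and \(Y'\) contains a copy of \(T\).
\end{itemize}
No lifting of \(h\) to \(Q\) is needed, and \(\Cfourstar\) would not supply one.

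\textbf{Step~2 and the summand-square-free step are only announced, and Step~2 uses the wrong submodule.} For a chain union \((X,Y,f)\) with \(X\ess X^*\ds M\) and \(Y\ess Y^*\ds M\), you cannot extend \(f\) to the hulls. Use instead the mixed submodule \(X\oplus Y^*\). It is direct because \(X\cap Y^*\) meets the essential submodule \(Y\) of \(Y^*\) only in \(X\cap Y=0\). Its \(\Cfour\) property, applied to the monomorphism \(f:X\to Y^*\), gives \(Y\ds Y^*\), hence \(Y=Y^*\); symmetrically \(X=X^*\). So Zorn applies. Then \(X\oplus Y\ds M\) follows directly from \(\Cfour\) of \(M\), since independent isomorphic summands of a \(\Cfour\)-module have a summand sum; no essential hull is needed.

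For summand-square-freeness, \(A\) and \(B\) need not a priori contain any simple submodule. The point is that \(\Cfourstar\) forces them to be semisimple. Given \(\varphi:A\to B\) an isomorphism and any \(C\le A\), \(\Cfour\) of the submodule \(C\oplus B\) applied to \(\varphi|_C\) gives \(\varphi(C)\ds B\). So every submodule of \(B\) is a summand. Only then do simple summands \(S\ds A\) and \(\varphi(S)\ds B\) exist, and they enlarge the triple.

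The paper itself gives no argument here; it quotes Theorem~3.12 of Ibrahim, Eid and El-Guindy. Your outline, repaired as above, would be a self-contained replacement for that citation.
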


\begin{proof}
This is the portion of
\cite[Theorem~3.12]{IbrahimEidElGuindy2026} used below.
\end{proof}

We record a finite-product fact.  Its proof will be used on both sides.

\begin{lemma}[Product lemma]\label{lem:product}
Let \(A\) and \(B\) be rings.
\begin{enumerate}
\item \(A\times B\) is a right \(\Cfourstar\)-ring if and only if both
      \(A\) and \(B\) are right \(\Cfourstar\)-rings.
\item \(A\times B\) is strongly right \(\Cfourstar\) if and only if both
      \(A\) and \(B\) are strongly right \(\Cfourstar\).
\item The analogous assertions hold on the left.
\end{enumerate}
\end{lemma}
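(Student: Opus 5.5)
The plan is to reduce everything to a structural description of submodules of \(R_R\) for \(R=A\times B\). Let \(e=(1,0)\) be the central idempotent, so \(R_R=eR\oplus(1-e)R\) with \(eR\cong A\) and \((1-e)R\cong B\) as rings. For any right ideal \(I\) of \(R\) we have \(I=Ie\oplus I(1-e)\), where \(Ie\) is a right ideal of \(A\) and \(I(1-e)\) is a right ideal of \(B\). The \(R\)-module structure on \(Ie\) is the \(A\)-module structure pulled back along the projection \(R\to A\), and the same holds for \(I(1-e)\) and \(B\).

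The key observation is that this pullback is fully faithful and lattice-preserving. For right \(A\)-modules \(X,X'\) (viewed as \(R\)-modules annihilated by \(1-e\)) we have \(\Hom_R(X,X')=\Hom_A(X,X')\), and the \(R\)-submodules of \(X\) coincide with the \(A\)-submodules. Moreover, \(\Hom_R(X,Y)=0\) whenever \(X\) is killed by \(1-e\) and \(Y\) is killed by \(e\). It follows that every \(R\)-module \(M=Me\oplus M(1-e)\) has
\[
\mathrm{End}_R(M)=\mathrm{End}_A(Me)\times\mathrm{End}_B(M(1-e)).
\]
Hence submodules, direct summands, essential submodules, kernels and images all split componentwise. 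In particular, every direct decomposition \(M=C\oplus D\) satisfies \(C=Ce\oplus C(1-e)\) and \(D=De\oplus D(1-e)\).

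For part (1), I would first prove a module lemma: if \(M=M_1\oplus M_2\) with \(M_1\) killed by \(1-e\) and \(M_2\) killed by \(e\), then \(M\) is \(\Cfour\) if and only if \(M_1\) is \(\Cfour\) over \(A\) and \(M_2\) is \(\Cfour\) over \(B\). Given \(M=C\oplus D\) and \(f:C\to D\), \(f\) decomposes as \(f_1\oplus f_2\) with \(f_1:Ce\to De\) and \(f_2:C(1-e)\to D(1-e)\). Then \(\ker f\ds C\) if and only if each \(\ker f_i\) is a summand of the corresponding component, and the same holds for \(\operatorname{Im}f\). This gives both directions at once.

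Applying the lemma to every submodule \(N\le R_R\), and using the fact that the submodules of \(R_R\) are exactly the pairs of right ideals of \(A\) and \(B\), yields \(\Cfourstar\) for \(R\) if and only if it holds for \(A\) and \(B\). For the converse direction, a submodule of \(A_A\) is taken to the submodule \(N\times 0\) of \(R_R\).

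For part (2), I need the semi-weak-CS condition componentwise. A semisimple summand \(X\ds R_R\) splits as \(Xe\oplus X(1-e)\) with both pieces semisimple summands. An isomorphism \(f:X\to Y\) splits into componentwise isomorphisms, and \(X\cap Y=0\) holds if and only if it holds componentwise. So a chain in \(\mathcal S(R_R)\) projects to chains in \(\mathcal S(A_A)\) and \(\mathcal S(B_B)\). Unions commute with multiplication by \(e\), and "essential in a direct summand" is checked componentwise, since \(U\ess W\) if and only if \(Ue\ess We\) and \(U(1-e)\ess W(1-e)\) for \(U\le W\) that are compatible with \(e\). This gives the forward direction.

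For the reverse direction, a chain in \(\mathcal S(A_A)\) embeds as the chain \((X\times0,Y\times0,f\times0)\), and a summand of \(R_R\) containing it essentially projects onto an \(A\)-summand. Combined with (1), this gives (2).

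Part (3) follows by the same argument with \({}_RR\), since \(e\) is central. I expect the main obstacle to be the bookkeeping for essentiality under componentwise splitting. The point to check is that if \(U\ess W\) in \(M\), then \(Ue\ess We\): any nonzero \(R\)-submodule \(V\le We\) meets \(U\), and \(U\cap V\subseteq Ue\) because \(V=Ve\). Conversely, a nonzero submodule \(V\le W\) has \(Ve\ne0\) or \(V(1-e)\ne0\), and \(Ve\subseteq V\) since \(e\) is central. Once this is in place, the rest is routine.
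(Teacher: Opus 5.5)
Your proposal is correct and takes essentially the same route as the paper: split every module, submodule, homomorphism, direct summand and chain in \(\mathcal S(M)\) along the central idempotent \((1,0)\), then check the \(\Cfour\) and semi-weak-CS conditions componentwise. You spell out some bookkeeping the paper leaves implicit, namely the embedding \((X\times 0,Y\times 0,f\times 0)\) for one direction of part (2) and the essentiality check; your ``forward'' and ``reverse'' labels in part (2) are swapped relative to the statement, which is harmless.
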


\begin{proof}
Put \(c=(1,0)\) and \(d=(0,1)\).  Every \(A\times B\)-module \(M\)
decomposes as \(cM\oplus dM\), and every submodule \(N\leq M\) decomposes
as \(cN\oplus dN\).  Homomorphisms, kernels, images, and direct summands
are therefore computed componentwise.  This proves the
\(\Cfourstar\) assertions.

Semisimple submodules, essentiality, and direct summands are also
computed componentwise.  A chain in \(\mathcal S(M)\) consequently
projects to chains in \(\mathcal S(cM)\) and \(\mathcal S(dM)\), and its
unions are essential in direct summands precisely when the projected
unions have that property.  This proves the semi-weak-CS assertions and
hence the strong assertions.
\end{proof}

\section{Two-way orthogonality and central splitting}
\label{sec:central}

The passage from a module decomposition of \(R_R\) to a ring
decomposition is controlled by two cross-corners.  We first make this
identification explicit.

\begin{lemma}[Corner--Hom identification]\label{lem:corner-hom}
Let \(e^2=e\in R\).  Evaluation at the indicated generator gives
natural isomorphisms of abelian groups
\[
\Hom_R(eR,(1-e)R)\cong(1-e)Re
\]
and
\[
\Hom_R((1-e)R,eR)\cong eR(1-e).
\]
\end{lemma}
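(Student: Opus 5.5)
The plan is to derive both isomorphisms from the standard identification \(\Hom_R(eR,M)\cong Me\), valid for any right \(R\)-module \(M\). I will prove this general fact once and then specialize. The two cases are \(M=(1-e)R\) with idempotent \(e\), and \(M=eR\) with the idempotent \(1-e\).

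First, define \(\Phi:\Hom_R(eR,M)\to Me\) by \(\Phi(\varphi)=\varphi(e)\).

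\begin{itemize}
\item The value lands in \(Me\), since \(\varphi(e)=\varphi(e\cdot e)=\varphi(e)e\).
\item \(\Phi\) is additive, so it is a homomorphism of abelian groups.
\item \(\Phi\) is injective. Every element of \(eR\) has the form \(er\), and \(\varphi(er)=\varphi(e)r\), so \(\varphi\) is determined by \(\varphi(e)\).
\item \(\Phi\) is surjective. Given \(m\in Me\), define \(\psi(x)=mx\) for \(x\in eR\). This is clearly right \(R\)-linear, and \(\psi(e)=me=m\) because \(m\in Me\).
\end{itemize}

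Naturality in \(M\) is immediate: for \(g:M\to M'\), both composites send \(\varphi\) to \(g(\varphi(e))\).

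Second, specialize to the two cases.

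\begin{itemize}
\item Take \(M=(1-e)R\). Then \(Me=(1-e)Re\), which gives the first isomorphism.
\item Apply the same argument to the idempotent \(1-e\) and \(M=eR\). Then \(M(1-e)=eR(1-e)\), which gives the second isomorphism. The evaluation is now at the generator \(1-e\), matching ``evaluation at the indicated generator.''
\end{itemize}

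The only step needing care is surjectivity. There one must check that left multiplication by \(m\) is a well-defined map on \(eR\), which is clear since it is a restriction of a map on \(R\). One must also check that it actually returns \(m\) at \(e\), and this uses \(m=me\). Everything else is routine.
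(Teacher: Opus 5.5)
Your proof is correct and matches the paper's argument: evaluation at $e$ lands in the corner because $f(e)=f(e)e$, and the inverse is given by multiplication by the corner element, which works because that element is fixed by right multiplication by $e$. Proving $\Hom_R(eR,M)\cong Me$ once for an arbitrary module $M$ and then specializing (with $1-e$ in place of $e$ for the second isomorphism) is only a cosmetic generalization of the paper's ``proved in the same way.''
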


\begin{proof}
If \(f:eR\to(1-e)R\), then \(f(e)=f(e^2)=f(e)e\), so
\(f(e)\in(1-e)Re\).  Conversely, \(z\in(1-e)Re\) defines
\[
f_z:eR\longrightarrow(1-e)R,\qquad f_z(er)=zr.
\]
This is well defined because \(z=ze\).  The two assignments are inverse.
The second isomorphism is proved in the same way.
\end{proof}

\begin{lemma}[Centrality criterion]\label{lem:centrality}
Let \(e^2=e\in R\).  If
\[
\Hom_R(eR,(1-e)R)=0=\Hom_R((1-e)R,eR),
\]
then \(e\) is central and
\[
R\cong eR\times(1-e)R
\]
as rings.
\end{lemma}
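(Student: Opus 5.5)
By Lemma~\ref{lem:corner-hom}, the two Hom-vanishings in the hypothesis translate directly into the corner equalities
\[
(1-e)Re=0\qquad\text{and}\qquad eR(1-e)=0 .
\]
The plan is first to deduce centrality of \(e\) from these, and then to write down the ring decomposition explicitly.

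For centrality, I would take an arbitrary \(r\in R\) and use the Peirce decomposition
\[
r=ere+er(1-e)+(1-e)re+(1-e)r(1-e).
\]
The two cross terms vanish by the corner equalities, so \(r=ere+(1-e)r(1-e)\). Multiplying on the left and on the right by \(e\) then gives \(er=ere=re\). Hence \(e\) is central, and consequently so is \(1-e\).

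For the ring decomposition, centrality makes \(eR=eRe\) a two-sided ideal. It is a ring with identity \(e\) under the multiplication of \(R\), since \((er)(es)=e^2rs=ers\); the same holds for \((1-e)R\) with identity \(1-e\). I would then consider the map
\[
\varphi:R\to eR\times(1-e)R,\qquad \varphi(r)=(er,(1-e)r).
\]
This map is additive and unital, since \(\varphi(1)=(e,1-e)\). It is multiplicative because centrality gives \(ers=(er)(es)\), and likewise for \(1-e\). It is bijective with inverse \((a,b)\mapsto a+b\): indeed \(r=er+(1-e)r\), and for \(a\in eR\) and \(b\in(1-e)R\) we have \(e(a+b)=a\) and \((1-e)(a+b)=b\), because \(eb=e(1-e)b'=0\).

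There is no real obstacle here. The only point requiring care is the direction in which each Hom group is identified with a corner, and both directions are supplied by Lemma~\ref{lem:corner-hom}, so both corners vanish.
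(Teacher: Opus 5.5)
Your proposal is correct and follows the paper's proof. You use Lemma~\ref{lem:corner-hom} to convert the two Hom-vanishings into $(1-e)Re=0=eR(1-e)$, deduce $er=ere=re$ from the Peirce decomposition, and identify $R$ with $eR\times(1-e)R$, writing out the isomorphism $r\mapsto(er,(1-e)r)$ explicitly where the paper only notes that the two ideals annihilate each other and have identities $e$ and $1-e$.
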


\begin{proof}
Lemma~\ref{lem:corner-hom} gives
\[
(1-e)Re=0=eR(1-e).
\]
For \(r\in R\),
\[
re=ere+(1-e)re=ere,
\qquad
er=ere+er(1-e)=ere.
\]
Thus \(re=er\).  The two ideals \(eR\) and \((1-e)R\) annihilate each
other and have identities \(e\) and \(1-e\), respectively.  The displayed
ring product follows.
\end{proof}

We now obtain the first principal result.

\begin{theorem}[Central splitting theorem]\label{thm:central-splitting}
Let \(R\) be a strongly right \(\Cfourstar\)-ring.  There is a central
idempotent \(e\in R\) such that
\[
R\cong \Sigma\times T,
\qquad
\Sigma=eR,\quad T=(1-e)R,
\]
and the following hold:
\begin{enumerate}
\item \(\Sigma\) is semisimple artinian;
\item \(T_T\) is summand-square-free;
\item both \(\Sigma\) and \(T\) are strongly right
      \(\Cfourstar\)-rings.
\end{enumerate}
\end{theorem}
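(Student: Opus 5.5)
Apply Theorem~\ref{thm:source-decomp} to the strongly \(\Cfourstar\)-module \(M=R_R\). This gives \(R_R=P\oplus Q\) with \(P\) semisimple and \(Q\) summand-square-free. Write \(1=e+e'\) with \(e\in P\), \(e'\in Q\). The standard argument then shows that \(e\) is idempotent, \(e'=1-e\), \(P=eR\) and \(Q=(1-e)R\). Since \(R_R\) is projective, both Hom-vanishing statements of Theorem~\ref{thm:source-decomp} hold. Taking \(X=P\) and \(Y=Q\) gives
\[
\Hom_R(eR,(1-e)R)=0=\Hom_R((1-e)R,eR).
\]
Lemma~\ref{lem:centrality} then makes \(e\) central and yields \(R\cong eR\times(1-e)R\) as rings. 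We set \(\Sigma=eR\) and \(T=(1-e)R\).

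The next step is to translate the module-theoretic properties of the summands into ring-theoretic ones. The key observation is that for a central idempotent \(e\), the right \(R\)-submodules of \(eR\) coincide with the right ideals of the ring \(\Sigma=eR\). Indeed, \(R\) acts on \(eR\) through the projection \(R\to eR\), \(r\mapsto er\), because \(xr=x(er)\) for \(x\in eR\). The same holds for \(T\). Hence \(R\)-homomorphisms, direct summands and isomorphisms between submodules of \(eR\) are the same as the corresponding \(\Sigma\)-module notions.

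With this in hand the three properties follow. \(\Sigma_\Sigma\) is semisimple, so \(\Sigma\) is a semisimple ring, hence semisimple artinian by Wedderburn--Artin; this is (1). For (2), a pair of nonzero isomorphic direct summands of \(T_T\) with zero intersection would be such a pair in \(Q_R\), which contradicts summand-square-freeness. So \(T_T\) is summand-square-free. For (3), Lemma~\ref{lem:product}(2) applied to \(R\cong\Sigma\times T\) shows that both factors are strongly right \(\Cfourstar\), since \(R\) is.

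The main obstacle is only bookkeeping. One must confirm that Theorem~\ref{thm:source-decomp}, as quoted, applies with \(X=P\) and \(Y=Q\) themselves, which it does because \(P\le P\) and \(Q\le Q\). One must also justify the identification of \(R\)-module and \(\Sigma\)-module structures. I would handle the latter through the surjection \(R\to\Sigma\), whose kernel \((1-e)R\) annihilates \(\Sigma\).
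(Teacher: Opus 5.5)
Your proposal is correct and matches the paper's proof step for step. You apply Theorem~\ref{thm:source-decomp} to the projective module \(R_R\) and use both Hom-vanishings with \(X=P\), \(Y=Q\) together with Lemma~\ref{lem:centrality} to get a central \(e\). You then identify \(R\)-submodules of \(eR\) and \((1-e)R\) with right ideals of \(\Sigma\) and \(T\), and finish with Lemma~\ref{lem:product}. The only cosmetic differences are that you spell out how the idempotent arises from \(1=e+e'\), and that you get (1) from ``\(\Sigma_\Sigma\) semisimple implies semisimple artinian'', whereas the paper argues that \(eR\) is cyclic and semisimple, hence of finite length.
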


\begin{proof}
Apply Theorem~\ref{thm:source-decomp} to the projective module \(R_R\).
We obtain
\[
R_R=P\oplus Q
\]
with \(P\) semisimple, \(Q\) summand-square-free, and
\[
\Hom_R(P,Q)=0=\Hom_R(Q,P).
\]
Since \(P\ds R_R\), there is an idempotent \(e\in R\) with
\[
P=eR,\qquad Q=(1-e)R.
\]
Lemma~\ref{lem:centrality} shows that \(e\) is central and that
\(R\cong eR\times(1-e)R\).

The right ideal \(eR\) is cyclic and semisimple.  It therefore has finite
length.  Since the \(R\)-action on \(eR\) factors through the ring \(eR\),
the right regular module of \(eR\) is semisimple of finite length.
Consequently \(eR\) is semisimple artinian.

The categories of right \(R\)-submodules and right \(T\)-submodules of
\((1-e)R\) coincide.  Direct summands coincide as well.  Hence \(T_T\)
is summand-square-free.  Finally, the product lemma shows that both
factors inherit the strongly right \(\Cfourstar\) property from \(R\).
\end{proof}

\section{Comparison and canonicality of residual factors}
\label{sec:comparison}

The central splitting theorem does not assert uniqueness of \(e\).
This section determines the precise ambiguity.

\begin{definition}\label{def:admissible}
Let \(R\) be a strongly right \(\Cfourstar\)-ring.  Its set of
\emph{admissible splitting idempotents} is
\[
\begin{aligned}
\Ecal(R)=\{e\in Z(R):\;&e^2=e,\quad
eR\text{ is semisimple artinian},\\
&((1-e)R)_{(1-e)R}\text{ is summand-square-free}\}.
\end{aligned}
\]
For \(e\in\Ecal(R)\), put
\[
\Sigma_e=eR,\qquad T_e=(1-e)R.
\]
\end{definition}

Theorem~\ref{thm:central-splitting} says that \(\Ecal(R)\neq\varnothing\).
The next elementary classification will identify the entire ambiguity
between two residual factors.

\begin{lemma}[Semisimple summand-square-free rings]
\label{lem:ssf-semisimple}
Let \(S\) be semisimple artinian.  Then \(S_S\) is
summand-square-free if and only if
\[
S\cong D_1\times\cdots\times D_n
\]
for division rings \(D_1,\ldots,D_n\), with the empty product allowed
when \(S=0\).
\end{lemma}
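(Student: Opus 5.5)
The plan is to combine the Wedderburn--Artin theorem with the observation that, for a semisimple ring, summand-square-freeness says exactly that no simple isotypic component appears with multiplicity at least two. By Wedderburn--Artin, write
\[
S\cong M_{n_1}(D_1)\times\cdots\times M_{n_k}(D_k),
\]
with \(D_i\) division rings. Equivalently,
\[
S_S\cong V_1^{n_1}\oplus\cdots\oplus V_k^{n_k},
\]
where the \(V_i\) are pairwise nonisomorphic simple right \(S\)-modules. Since \(S_S\) is semisimple, every submodule is a direct summand. Hence summand-square-free and square-free coincide for \(S_S\), and the proof reduces to showing that \(S_S\) is square-free if and only if every \(n_i=1\).

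For the forward direction, suppose some \(n_i\geq 2\). Then \(S_S\) contains two copies of \(V_i\) as independent direct summands. Concretely, take the idempotents \(e_{11}\) and \(e_{22}\) in the factor \(M_{n_i}(D_i)\), viewed inside \(S\). The summands \(e_{11}S\) and \(e_{22}S\) are nonzero, isomorphic (via left multiplication by \(e_{21}\) and \(e_{12}\)), and have zero intersection. This contradicts summand-square-freeness. Hence each \(n_i=1\), and \(S\cong D_1\times\cdots\times D_k\).

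For the converse, suppose \(S=D_1\times\cdots\times D_n\). Then \(S_S=\bigoplus_i D_i\) with the \(D_i\) pairwise nonisomorphic simple modules. The modules are simple because each \(D_i\) is a division ring. They are nonisomorphic because they have distinct annihilators: \(D_i\) is annihilated by the central idempotents \(c_j\) for \(j\neq i\). Now suppose \(A,B\leq S_S\) are nonzero with \(A\cap B=0\) and \(A\cong B\). By Krull--Schmidt for semisimple modules, or simply by counting isotypic components, \(A\) is a direct sum of a set \(I\) of the \(D_i\), namely its isotypic pieces, since each type occurs with multiplicity one. Likewise \(B\) is the sum over a set \(J\). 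The isomorphism \(A\cong B\) forces \(I=J\neq\varnothing\), and then \(A\cap B\neq 0\), a contradiction.

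The isotypic argument is the step I expect to need the most care. Because each \(D_i\) occurs once, any submodule equals its intersection with the isotypic components, and each component \(D_i\) is simple. So a submodule is exactly \(\bigoplus_{i\in I}D_i\) for a unique subset \(I\). I would prove this using the central idempotents \(c_i\): for any submodule \(N\) one has \(N=\bigoplus_i Nc_i\), and \(Nc_i\) is either \(0\) or \(D_i\). Isomorphic submodules have the same set \(I\), because \(c_i\) acts as zero on a module exactly when \(i\notin I\). The case \(S=0\) is trivial.
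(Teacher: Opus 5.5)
Your proof is correct and follows the paper's route. Necessity uses Wedderburn--Artin and two isomorphic row summands of a factor $M_m(D)$ with $m\geq 2$. Sufficiency uses the fact that every (summand) submodule of $D_1\times\cdots\times D_n$ has the form $cS$ for a central idempotent $c$. The only difference is cosmetic: the paper finishes with $\Hom_S(cS,dS)\cong dSc=0$ when $cd=0$, whereas you compare the supports $\{i : Nc_i\neq 0\}$, an isomorphism invariant, which is the same idea.
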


\begin{proof}
By Wedderburn--Artin,
\[
S\cong\prod_{i=1}^{n}M_{m_i}(D_i).
\]
The right regular module of \(M_{m_i}(D_i)\) is a direct sum of \(m_i\)
isomorphic simple modules.  If some \(m_i\geq2\), two of these simple
summands violate summand-square-freeness.

Conversely, suppose every \(m_i=1\).  Every direct summand of \(S_S\)
has the form \(cS\) for a central idempotent \(c\).  If
\(cS\cap dS=0\), then \(cd=0\), and
\[
\Hom_S(cS,dS)\cong dSc=0.
\]
Thus two nonzero disjoint direct summands cannot be isomorphic.
\end{proof}

We order central idempotents by \(e\leq f\) when \(ef=e\), and write
\[
e\vee f=e+f-ef.
\]

\begin{proposition}[Join and absorption]\label{prop:join}
Let \(e\in\Ecal(R)\).
\begin{enumerate}
\item If \(f\in\Ecal(R)\), then \(e\vee f\in\Ecal(R)\).
\item More generally, if \(c\in Z(R)\) is idempotent and \(cR\) is
      semisimple artinian, then \(e\vee c\in\Ecal(R)\).
\end{enumerate}
\end{proposition}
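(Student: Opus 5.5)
Part (1) follows from part (2): if \(f\in\Ecal(R)\), then \(f\) is a central idempotent with \(fR\) semisimple artinian. So we prove (2). Put \(g=e\vee c=e+c-ec\). Since \(e,c\) are central idempotents, \(g\) is a central idempotent and \(1-g=(1-e)(1-c)\). We must show two things: \(gR\) is semisimple artinian, and \((1-g)R\) is summand-square-free as a ring.

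For the first, we decompose \(g\) into orthogonal central pieces, \(g=e+(1-e)c\). Hence
\[
gR\cong eR\times(1-e)cR .
\]
The factor \(eR\) is semisimple artinian by hypothesis. The factor \((1-e)cR\) is a direct factor of the semisimple artinian ring \(cR\), corresponding to the central idempotent \((1-e)c\) of \(cR\). A direct factor of a semisimple artinian ring is semisimple artinian, for instance by Wedderburn--Artin, or because its regular module is a summand of \(cR_{cR}\). A finite product of semisimple artinian rings is semisimple artinian.

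For the second, set \(T=(1-e)R\) and \(h=(1-e)(1-c)\), so \((1-g)R=hR=hT\). Here \(h\) is a central idempotent of \(T\), and \(T\cong hT\times(1-e)cT\). We claim that a direct factor of a summand-square-free ring is summand-square-free. Suppose \(A,B\) are direct summands of \((hT)_{hT}\) with \(A\cap B=0\) and \(A\cong B\). Right \(hT\)-submodules of \(hT\) coincide with right \(T\)-submodules, since \(T\) acts through the projection \(T\to hT\). Isomorphisms and summand decompositions also transfer, because \(hT\) is itself a \(T\)-direct summand of \(T_T\). So \(A,B\) are disjoint isomorphic direct summands of \(T_T\), forcing \(A=B=0\). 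This is the same identification already used in the proof of Theorem~\ref{thm:central-splitting}, so summand-square-freeness of \(T_T\) passes to \((hT)_{hT}\).

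Combining the two parts, \(g\) is a central idempotent, \(gR\) is semisimple artinian, and \((1-g)R\) is summand-square-free. Hence \(g\in\Ecal(R)\).

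The main point needing care is the transfer of direct summands between \(hT\)-modules and \(T\)-modules: a summand of \(hT\) inside \(hT\) must be a summand of \(T\). This holds because \(T=hT\oplus(1-h)T\), and summands are transitive. Everything else is routine central-idempotent bookkeeping.
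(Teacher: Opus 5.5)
Your proof is correct and follows the paper's argument essentially step for step. You reduce (1) to (2), write \(gR\cong eR\times(1-e)cR\), and note that \((1-g)R=(1-e)(1-c)R\) is a direct summand of \(T_e\). Your explicit check that summands of \((hT)_{hT}\) are summands of \(T_T\) spells out the ring-versus-module identification that the paper leaves implicit.
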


\begin{proof}
It is enough to prove (2).  Put \(g=e\vee c\).  Since \(e\) and \(c\)
are central,
\[
gR=eR\times(1-e)cR.
\]
The first factor is semisimple artinian, and the second is a direct
factor of the semisimple artinian ring \(cR\).  Hence \(gR\) is
semisimple artinian.  Moreover,
\[
(1-g)R=(1-e)(1-c)R
\]
is a direct summand of \(T_e=(1-e)R\).  A direct summand of a
summand-square-free module is summand-square-free, so \(g\in\Ecal(R)\).
Part (1) follows because \(fR\) is semisimple artinian.
\end{proof}

\begin{theorem}[Division-stable comparison]\label{thm:comparison}
Let \(e,f\in\Ecal(R)\), and set
\[
C_{e,f}=(1-e)(1-f)R,\quad
D_{e,f}=e(1-f)R,\quad
D_{f,e}=f(1-e)R.
\]
Then
\[
T_e\cong D_{f,e}\times C_{e,f},
\qquad
T_f\cong D_{e,f}\times C_{e,f},
\]
and both \(D_{e,f}\) and \(D_{f,e}\) are finite products of division
rings.
\end{theorem}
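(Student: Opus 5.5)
The plan is to work entirely with the four orthogonal central idempotents
\[
ef,\quad e(1-f),\quad (1-e)f,\quad (1-e)(1-f),
\]
which sum to \(1\). Since \(e\) and \(f\) are central, all four are central idempotents, and they are pairwise orthogonal.

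The first step is the two displayed product decompositions. Multiplying \(1-e=(1-e)f+(1-e)(1-f)\) by \(R\) gives
\[
(1-e)R=(1-e)fR\oplus(1-e)(1-f)R.
\]
Both summands are two-sided ideals generated by orthogonal central idempotents, so they annihilate each other, and each has its own identity. As in the proof of Lemma~\ref{lem:centrality}, this yields a ring isomorphism \(T_e\cong D_{f,e}\times C_{e,f}\). The same computation with the roles of \(e\) and \(f\) exchanged gives \(T_f\cong D_{e,f}\times C_{e,f}\). This step is routine bookkeeping.

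The second step shows that each of \(D_{e,f}\) and \(D_{f,e}\) is both semisimple artinian and summand-square-free. I treat \(D_{e,f}=e(1-f)R\); the other case is symmetric.

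\emph{Semisimple artinian.} The ideal \(e(1-f)R\) is a direct ring factor of \(eR\), since \(eR\cong efR\times e(1-f)R\). A direct factor of a semisimple artinian ring is semisimple artinian, so \(D_{e,f}\) is semisimple artinian.

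\emph{Summand-square-free.} The ideal \(e(1-f)R\) is a direct summand of \(T_f=(1-f)R\) as a right \((1-f)R\)-module, because \((1-f)R=f'R\oplus e(1-f)R\) with \(f'=(1-e)(1-f)\). As already used in Proposition~\ref{prop:join}, a direct summand of a summand-square-free module is summand-square-free. Hence \(e(1-f)R\) is summand-square-free as a right \(T_f\)-module, and so as a right \(R\)-module.

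The one point needing care, which I expect to be the only real obstacle, is passing to \(D_{e,f}\) as a module over itself. The action of \(R\) (or of \(T_f\)) on \(e(1-f)R\) factors through the ring \(e(1-f)R\), via \(r\mapsto e(1-f)r\). Therefore submodules, direct summands, and isomorphisms between them coincide for the two module structures, exactly as argued in the last paragraph of the proof of Theorem~\ref{thm:central-splitting}. It follows that \((D_{e,f})_{D_{e,f}}\) is summand-square-free.

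The final step is to apply Lemma~\ref{lem:ssf-semisimple} to \(S=D_{e,f}\) and to \(S=D_{f,e}\). Each is semisimple artinian with \(S_S\) summand-square-free, so each is a finite product of division rings, possibly the empty product when the idempotent \(e(1-f)\), respectively \(f(1-e)\), is zero. This completes the plan.
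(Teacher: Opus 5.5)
Your proof is correct and follows the paper's argument: the displayed factorizations come from the central idempotents \(e\) and \(f\), and each of \(D_{e,f}\), \(D_{f,e}\) is both a direct factor of a semisimple artinian ring and a direct summand of a summand-square-free residual, so Lemma~\ref{lem:ssf-semisimple} applies. Your explicit check that submodules, direct summands, and isomorphisms over \(T_f\) coincide with those over \(D_{e,f}\) fills in a step the paper leaves implicit.
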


\begin{proof}
The displayed decompositions are the decompositions of \(T_e\) and
\(T_f\) induced by the central idempotents \(f\) and \(e\), respectively.
The ring \(D_{e,f}\) is a direct factor of \(\Sigma_e=eR\), so it is
semisimple artinian.  It is also a direct factor of \(T_f=(1-f)R\), so
its right regular module is summand-square-free.  By
Lemma~\ref{lem:ssf-semisimple}, it is a finite product of division rings.
The same argument applies to \(D_{f,e}\).
\end{proof}

\begin{corollary}[Comparable splittings]\label{cor:comparable}
If \(e,f\in\Ecal(R)\) and \(e\leq f\), then
\[
T_e\cong (f-e)R\times T_f,
\]
where \((f-e)R\) is a finite product of division rings.
\end{corollary}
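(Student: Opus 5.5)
This is a direct specialization of Theorem~\ref{thm:comparison}. The plan is to apply that theorem to the pair \(e\leq f\) and observe that the comparison data collapse.

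First, since \(e\) and \(f\) are central idempotents with \(ef=e\), I compute the three corner idempotents appearing in Theorem~\ref{thm:comparison}.
\begin{itemize}
\item The first is \(e(1-f)=e-ef=0\), so \(D_{e,f}=0\).
\item The second is \(f(1-e)=f-fe=f-e\), so \(D_{f,e}=(f-e)R\).
\item The third is \((1-e)(1-f)=1-e-f+ef=1-f\), so \(C_{e,f}=(1-f)R=T_f\).
\end{itemize}
These are routine manipulations using \(ef=fe=e\).

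Substituting into the first decomposition of Theorem~\ref{thm:comparison} gives \(T_e\cong D_{f,e}\times C_{e,f}=(f-e)R\times T_f\). The same theorem also asserts that \(D_{f,e}\) is a finite product of division rings. Hence \((f-e)R\) is a finite product of division rings, with the empty product allowed when \(f=e\).

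For a self-contained check of that last point, one can redo it directly. The ring \((f-e)R=f(1-e)R\) is a direct factor of \(fR\), which is semisimple artinian. It is also a direct factor of \(T_e\), and a direct summand of a summand-square-free module is summand-square-free. Lemma~\ref{lem:ssf-semisimple} then applies. There is no real obstacle here; the only care needed is the idempotent arithmetic showing \((1-e)(1-f)=1-f\) under \(e\leq f\).
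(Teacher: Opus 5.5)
Your proposal is correct and follows the paper's proof: the paper also computes \(e(1-f)=0\), \(f(1-e)=f-e\) and \((1-e)(1-f)=1-f\), then applies Theorem~\ref{thm:comparison}. Your self-contained check that \((f-e)R\) is a finite product of division rings is the \(D_{f,e}\) case of that theorem's proof, with \(fR\) and \(T_e\) as the two ambient factors.
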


\begin{proof}
Here \(e(1-f)=0\), \(f(1-e)=f-e\), and
\((1-e)(1-f)=1-f\).  Apply Theorem~\ref{thm:comparison}.
\end{proof}

\begin{theorem}[Canonical residual]
\label{thm:canonical}
Assume that \(\Idem Z(R)\) satisfies the ascending chain condition.
Then \(\Ecal(R)\) has a unique greatest element \(e_{\max}\).  The
factor
\[
T_{\mathrm{can}}=(1-e_{\max})R
\]
is therefore canonical.  Moreover:
\begin{enumerate}
\item \(e_{\max}R\) contains every central semisimple artinian direct
      factor of \(R\);
\item \(T_{\mathrm{can}}\) has no nonzero central semisimple artinian
      direct factor;
\item for every \(e\in\Ecal(R)\),
\[
T_e\cong D_e\times T_{\mathrm{can}}
\]
      for a finite product \(D_e\) of division rings.
\end{enumerate}
\end{theorem}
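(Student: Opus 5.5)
The plan is to use the ascending chain condition to produce a maximal element of \(\Ecal(R)\), and then use the join closure of Proposition~\ref{prop:join} to promote maximal to greatest. By Theorem~\ref{thm:central-splitting}, \(\Ecal(R)\) is a nonempty subset of \(\Idem Z(R)\). The ascending chain condition on \(\Idem Z(R)\), with the order \(e\leq f\iff ef=e\), therefore gives a maximal element \(e_{\max}\) of \(\Ecal(R)\). Otherwise one could build a strictly ascending sequence inside \(\Ecal(R)\) by choosing, at each stage, a strictly larger admissible idempotent.

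Now take any \(f\in\Ecal(R)\). By Proposition~\ref{prop:join}(1), \(e_{\max}\vee f\in\Ecal(R)\), and \(e_{\max}\leq e_{\max}\vee f\). Maximality forces \(e_{\max}\vee f=e_{\max}\), which means \(f\leq e_{\max}\). So \(e_{\max}\) is greatest, and a greatest element is automatically unique. This is what makes \(T_{\mathrm{can}}\) canonical.

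For (1), let \(c\in Z(R)\) be idempotent with \(cR\) semisimple artinian. Proposition~\ref{prop:join}(2) gives \(e_{\max}\vee c\in\Ecal(R)\), and greatestness then yields \(c\leq e_{\max}\vee c\leq e_{\max}\). Hence \(cR=ce_{\max}R\subseteq e_{\max}R\).

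For (2), suppose \(T_{\mathrm{can}}\) has a nonzero central semisimple artinian direct factor. Such a factor has the form \(c'T_{\mathrm{can}}\) with \(c'\) a central idempotent of \(T_{\mathrm{can}}\). Since \(T_{\mathrm{can}}\) is itself a ring direct factor of \(R\), I will check that \(c'\) is central in \(R\) and that \(c'R=c'T_{\mathrm{can}}\); this is the one point needing care, but it follows from the central decomposition \(R=e_{\max}R\times T_{\mathrm{can}}\). Part (1) then gives \(c'\leq e_{\max}\). On the other hand \(c'\leq 1-e_{\max}\), so \(c'=c'e_{\max}=0\), a contradiction.

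For (3), every \(e\in\Ecal(R)\) satisfies \(e\leq e_{\max}\). Corollary~\ref{cor:comparable} then gives \(T_e\cong(e_{\max}-e)R\times T_{\mathrm{can}}\), with \(D_e=(e_{\max}-e)R\) a finite product of division rings.

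I expect no genuine obstacle. The only subtle point is the first step: the ascending chain condition is assumed on \(\Idem Z(R)\) rather than on \(\Ecal(R)\), but it restricts to subsets, so maximal elements of the nonempty set \(\Ecal(R)\) exist.
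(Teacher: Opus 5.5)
Your proposal is correct and follows the paper's proof essentially step for step: the ACC yields a maximal element of $\Ecal(R)$, and closure under joins (Proposition~\ref{prop:join}) upgrades it to the greatest element. Part (2) of that proposition gives (1), the centrality in $R$ of a central idempotent of $T_{\mathrm{can}}$ gives (2), and Corollary~\ref{cor:comparable} gives (3), with your remarks on extracting a maximal element and on $cR=ce_{\max}R\subseteq e_{\max}R$ only spelling out steps the paper leaves implicit.
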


\begin{proof}
The set \(\Ecal(R)\) is nonempty and, by
Proposition~\ref{prop:join}, is closed under finite joins.  The ACC gives
a maximal member \(e_{\max}\).  If \(e\in\Ecal(R)\), then
\(e_{\max}\vee e\in\Ecal(R)\) and
\(e_{\max}\leq e_{\max}\vee e\).  Maximality forces
\(e_{\max}\vee e=e_{\max}\), hence \(e\leq e_{\max}\).
Thus \(e_{\max}\) is greatest and is unique.

Let \(c\in Z(R)\) be idempotent with \(cR\) semisimple artinian.
Proposition~\ref{prop:join} gives \(e_{\max}\vee c\in\Ecal(R)\), so
greatestness forces \(c\leq e_{\max}\).  This proves (1).  If the
canonical residual had a nonzero central semisimple artinian direct
factor with identity \(d\), then \(d\) would also be central in \(R\),
\(d\leq1-e_{\max}\), and (1) would give \(d\leq e_{\max}\), a
contradiction.  This proves (2), while (3) is an application of
Corollary~\ref{cor:comparable}.
\end{proof}

\begin{example}[Why a finiteness hypothesis is needed]
\label{ex:infinite-product}
Let \(I\) be infinite and let
\[
R=\prod_{i\in I}F_i
\]
for fields \(F_i\).  If ideals \(A,B\) have zero intersection, their
coordinate supports are disjoint: if coordinate \(i\) occurred in both,
multiplication by the \(i\)-th coordinate idempotent and by suitable
scalars would put that idempotent in \(A\cap B\).  For an
\(R\)-homomorphism \(\varphi:A\to B\) and \(a\in A\), multiplication by
the characteristic function of the support of \(a\) gives
\(\varphi(a)=0\).  Thus \(\Hom_R(A,B)=0\).  Hence \(R_R\) is square-free
and \(R\) is strongly right \(\Cfourstar\).

Central idempotents are characteristic functions \(e_A\) of subsets
\(A\subseteq I\).  The factor \(e_AR\) is semisimple artinian exactly
when \(A\) is finite (an infinite product has a strict descending chain
of coordinate ideals), while \((1-e_A)R\) is again
summand-square-free.  Thus
\[
\Ecal(R)=\{e_A:A\subseteq I\text{ finite}\}.
\]
It is closed under finite joins but has no greatest member.  Therefore a
canonical largest semisimple factor cannot be obtained without an
additional finiteness condition such as the central ACC in
Theorem~\ref{thm:canonical}.
\end{example}

We now remove the semisimple layer from the transfer problem.  The
conclusion is independent of the admissible idempotent.

\begin{theorem}[Residual reduction and defect invariance]
\label{thm:residual-reduction}
For every \(e\in\Ecal(R)\):
\begin{enumerate}
\item \(R\) is a left \(\Cfourstar\)-ring if and only if \(T_e\) is a
      left \(\Cfourstar\)-ring;
\item \(R\) is strongly left \(\Cfourstar\) if and only if \(T_e\) is
      strongly left \(\Cfourstar\).
\end{enumerate}
In particular, the truth or failure of either left-sided condition is
the same for all admissible residual factors.
\end{theorem}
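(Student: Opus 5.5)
Fix \(e\in\Ecal(R)\). Since \(e\) is central, \(R\cong \Sigma_e\times T_e\) as rings, with \(\Sigma_e=eR\) and \(T_e=(1-e)R\). The plan is to apply the left-handed version of the Product lemma, Lemma~\ref{lem:product}(3), to this factorization. It then suffices to show that \(\Sigma_e\) is automatically both a left \(\Cfourstar\)-ring and a strongly left \(\Cfourstar\)-ring. Once that holds, both (1) and (2) reduce to the corresponding statement for \(T_e\), because a product with a factor that always has the property has the property exactly when the other factor does.

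The key step is therefore to verify that a semisimple artinian ring \(\Sigma\) is strongly left \(\Cfourstar\). Since \(\Sigma\) is semisimple artinian, it is also left semisimple, so \({}_\Sigma\Sigma\) is a semisimple module.

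\emph{The \(\Cfour\) condition.} In a semisimple module every submodule is a direct summand. Hence every submodule \(N\) of \({}_\Sigma\Sigma\) is \(\Cfour\): whenever \(N=A\oplus B\) and \(f:A\to B\), the image \(\operatorname{Im}f\) is a submodule of the semisimple module \(B\), so \(\operatorname{Im}f\ds B\). This shows that \({}_\Sigma\Sigma\) is \(\Cfourstar\).

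\emph{The semi-weak-CS condition.} Take any chain in \(\mathcal S({}_\Sigma\Sigma)\). The union of its first coordinates is a submodule of \({}_\Sigma\Sigma\), hence a direct summand, and it is essential in itself. The same applies to the union of the second coordinates. So \({}_\Sigma\Sigma\) is semi-weak-CS.

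Together these make \(\Sigma_e\) strongly left \(\Cfourstar\), and in particular left \(\Cfourstar\). Lemma~\ref{lem:product}(3) applied to \(R\cong\Sigma_e\times T_e\) then gives (1) and (2).

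For the final sentence, note that the left-hand sides of (1) and (2) do not depend on \(e\). So for any \(e,f\in\Ecal(R)\), the ring \(T_e\) has each left-sided property if and only if \(R\) does, if and only if \(T_f\) does.

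No real obstacle is expected. The only point needing care is to confirm that Lemma~\ref{lem:product}(3) is being applied to the ring isomorphism \(R\cong eR\times(1-e)R\), which follows from the centrality of \(e\), and that "semisimple artinian" is used in its left-right symmetric form, so that \({}_\Sigma\Sigma\) is semisimple.
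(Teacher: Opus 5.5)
Your proof is correct and follows essentially the same route as the paper: split $R\cong\Sigma_e\times T_e$ via the central idempotent $e$ and apply Lemma~\ref{lem:product}(3), since the semisimple artinian factor $\Sigma_e$ is $\Cfourstar$ and semi-weak-CS on both sides. The only difference is that you spell out why ${}_{\Sigma}\Sigma$ is $\Cfourstar$ and semi-weak-CS (every submodule is a summand and is essential in itself), a point the paper simply asserts.
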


\begin{proof}
The ring \(\Sigma_e=eR\) is semisimple artinian, hence
\(\Cfourstar\) and semi-weak-CS on both sides.  Since
\[
R\cong\Sigma_e\times T_e,
\]
the assertions follow from Lemma~\ref{lem:product}.
\end{proof}

\begin{corollary}\label{cor:regular-recovery}
If \(T_e\) is regular for some \(e\in\Ecal(R)\), then \(R\) is strongly
left \(\Cfourstar\).
\end{corollary}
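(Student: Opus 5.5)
By Theorem~\ref{thm:residual-reduction}(2), it suffices to show that \(T_e\) is strongly left \(\Cfourstar\) whenever \(T_e\) is regular. The plan is to reduce this to the known regular-case symmetry of \cite{IbrahimEidElGuindy2026}. That result says that a regular ring which is strongly right \(\Cfourstar\) is also strongly left \(\Cfourstar\). The key steps follow.

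First, \(T_e\) is strongly right \(\Cfourstar\). This holds because \(R\cong\Sigma_e\times T_e\) with \(e\) central, and Lemma~\ref{lem:product}(2) passes the strong right property from \(R\) to each factor, exactly as in Theorem~\ref{thm:central-splitting}(3). Second, \(T_e\) is regular by hypothesis. So the regular-case theorem of \cite{IbrahimEidElGuindy2026} applies to \(T_e\) and makes it strongly left \(\Cfourstar\). Third, \(\Sigma_e\) is semisimple artinian, so it is strongly \(\Cfourstar\) on both sides. Hence Lemma~\ref{lem:product}(3), or equivalently Theorem~\ref{thm:residual-reduction}(2), gives that \(R\) is strongly left \(\Cfourstar\).

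If the cited symmetry is only available in a more structural form, I would argue directly instead. That form is: the summand-square-free residual of a regular strongly right \(\Cfourstar\)-ring is strongly regular. In a regular ring every principal right ideal is a direct summand. So summand-square-freeness of \((T_e)_{T_e}\) excludes nonzero \(a\) with \(a^2=0\): such an element would produce two disjoint isomorphic summands \(aT_e\) and a complement copy. Thus \(T_e\) is reduced, and a reduced regular ring is strongly regular. Strongly regular rings are abelian regular, and left and right principal ideals coincide, \(aT_e=T_ea\). From this, \({}_{T_e}T_e\) is again summand-square-free, in fact square-free because idempotents are central. Proposition~\ref{prop:squarefree-strong} then gives that \(T_e\) is strongly left \(\Cfourstar\).

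I expect the main obstacle to be the nilpotent step. One must produce isomorphic disjoint summands from \(a^2=0\) in a regular ring, using \(aT_e\cong xT_e\) with \(x\) an idempotent generator and \(aT_e\cap xT_e\)-type arguments. Alternatively, if the cited theorem applies verbatim, this step is unnecessary and the proof is just the three-step reduction.
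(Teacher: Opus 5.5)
Your proof is correct, and your fallback route is exactly the paper's. The paper cites \cite[Lemma~4.7]{IbrahimEidElGuindy2026}: a regular ring whose right regular module is summand-square-free is strongly regular. It deduces that \(T_e\) is square-free on both sides, applies Proposition~\ref{prop:squarefree-strong}, and finishes with Theorem~\ref{thm:residual-reduction}.

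Your primary route instead invokes the source's full regular-case symmetry as a black box. That is legitimate, but it makes the corollary nearly tautological. \(\Sigma_e\) is semisimple artinian, hence regular, so \(R\cong\Sigma_e\times T_e\) is itself regular. The cited symmetry then applies to \(R\) directly, and the residual plays no role. The paper's route uses only admissibility of \(e\) (summand-square-freeness of \(T_e\)) together with regularity, and never the strong right property of \(T_e\). It also exhibits the actual mechanism: strong regularity, hence two-sided square-freeness.

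If you keep the fallback, fix the nilpotent step, which as written is muddled. An idempotent generator of \(aT_e\) gives the same summand, not a disjoint one. Instead write \(a=aya\). Then \(ayT_e\) and \(yaT_e\) are nonzero summands, isomorphic via left multiplication by \(y\) and by \(a\). If \(a^2=0\), any \(z\) in their intersection satisfies \(z=ayz=yaz\), so \(az=a^2yz=0\) and \(z=yaz=0\).
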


\begin{proof}
A regular right summand-square-free ring is strongly regular
\cite[Lemma~4.7]{IbrahimEidElGuindy2026}.  Hence \(T_e\) is square-free
on both sides and is strongly left \(\Cfourstar\) by
Proposition~\ref{prop:squarefree-strong}.  Apply
Theorem~\ref{thm:residual-reduction}.
\end{proof}

\section{Anti-isomorphism transport}\label{sec:anti}

Opposite-ring techniques and involutions are standard in ring theory
\cite{Lam2001,KnusMerkurjevRostTignol1998,Pierce1982}.  What is needed
here is an exact transport statement for the strong
\(\Cfourstar\) structure.

\begin{definition}
Let \(\alpha:R\to S\) be an anti-isomorphism.  For a right \(R\)-module
\(M\), let \({}^{\alpha}M\) denote the left \(S\)-module with the same
additive group as \(M\) and action
\[
s\cdot m=m\,\alpha^{-1}(s)
\qquad(s\in S,\;m\in M).
\]
\end{definition}

\begin{lemma}[Lattice transport]\label{lem:lattice-transport}
The correspondence \(M\mapsto{}^{\alpha}M\) has the following
properties.
\begin{enumerate}
\item The right \(R\)-submodules of \(M\) are exactly the left
      \(S\)-submodules of \({}^{\alpha}M\), as additive subgroups.
\item A map between right \(R\)-modules is \(R\)-linear if and only if it
      is \(S\)-linear between the transported left modules.
\item Kernels, images, intersections, sums, direct summands, essential
      submodules, semisimple submodules, and chains of submodules are
      unchanged by transport.
\end{enumerate}
\end{lemma}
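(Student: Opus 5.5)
The plan is to verify (1) directly from the definition of \({}^{\alpha}M\), deduce (2) from (1), and then read off every item of (3) from (1) and (2), since each listed notion is defined purely in terms of the submodule lattice and the linear maps.

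For (1), let \(N\subseteq M\) be an additive subgroup. Then \(N\) is a right \(R\)-submodule exactly when \(nr\in N\) for all \(n\in N\) and \(r\in R\). Because \(\alpha\) is bijective, \(r\) runs over \(R\) exactly as \(s=\alpha(r)\) runs over \(S\). Since \(s\cdot n=n\,\alpha^{-1}(s)\), this closure condition is the same as \(s\cdot n\in N\) for all \(s\in S\). Before this, I would check once that \({}^{\alpha}M\) really is a left \(S\)-module. The key identity is
\[
(s_1s_2)\cdot m=m\,\alpha^{-1}(s_1s_2)=m\,\alpha^{-1}(s_2)\alpha^{-1}(s_1)=s_1\cdot(s_2\cdot m),
\]
where the middle equality holds because \(\alpha^{-1}\) is also an anti-isomorphism. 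Unitality follows from \(\alpha^{-1}(1)=1\).

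For (2), take an additive map \(g:M\to M'\). Then \(g(mr)=g(m)r\) for all \(r\) if and only if \(g(s\cdot m)=s\cdot g(m)\) for all \(s=\alpha(r)\). This is the same reindexing through the bijection \(\alpha\).

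For (3), kernels, images, intersections and sums are set-theoretic or additive constructions, so they are literally the same subgroups on both sides. A submodule \(N\) is a direct summand if there is a submodule \(N'\) with \(N\cap N'=0\) and \(N+N'=M\). By (1) the candidate complements are the same on both sides, so direct summands coincide. Essentiality says that every nonzero submodule meets \(N\) nontrivially, and this is again a statement about the same lattice. Chains of submodules are likewise unchanged.

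The only item needing slightly more care is semisimplicity. Here I would argue that a submodule is simple exactly when it is a minimal nonzero element of the lattice, and that a module is semisimple exactly when it is a sum of simple submodules. Both descriptions are lattice-theoretic, so by (1) they are preserved. I expect this to be the mild obstacle: making sure "simple" and "semisimple" are phrased internally through the lattice rather than through isomorphism types. That phrasing is available, so the step should go through.

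One could also note that isomorphisms are preserved by (2), which will matter later for the triples in \(\mathcal S(M)\). However, it is not needed for the statement as given.
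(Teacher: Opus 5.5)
Your proposal is correct and follows essentially the same route as the paper. In both, closure under the action is matched through the bijection \(\alpha\), linearity is checked by the same reindexing \(s\cdot m=m\,\alpha^{-1}(s)\), and item (3) is read off from the fact that the underlying subgroups and maps are identical; your extra check that \({}^{\alpha}M\) is a left \(S\)-module and your lattice-theoretic description of simple and semisimple submodules only spell out what the paper leaves implicit.
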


\begin{proof}
Surjectivity of \(\alpha\) shows that closure under the right \(R\)-action
is equivalent to closure under the transported left \(S\)-action.  For a
homomorphism \(f\),
\[
f(s\cdot m)
=f(m\alpha^{-1}(s))
=f(m)\alpha^{-1}(s)
=s\cdot f(m).
\]
This proves the first two assertions.  The third follows because the
underlying subgroups and homomorphisms are identical.
\end{proof}

\begin{theorem}[Anti-isomorphism transport theorem]
\label{thm:anti-transport}
Let \(\alpha:R\to S\) be an anti-isomorphism and let \(M\) be a right
\(R\)-module.  The following equivalences hold:
\begin{align*}
M\text{ is }\Cfour
&\Longleftrightarrow {}^{\alpha}M\text{ is }\Cfour,\\
M\text{ is }\Cfourstar
&\Longleftrightarrow {}^{\alpha}M\text{ is }\Cfourstar,\\
M\text{ is semi-weak-CS}
&\Longleftrightarrow {}^{\alpha}M\text{ is semi-weak-CS},\\
M\text{ is strongly }\Cfourstar
&\Longleftrightarrow {}^{\alpha}M\text{ is strongly }\Cfourstar.
\end{align*}
\end{theorem}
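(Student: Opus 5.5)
The plan is to derive all four equivalences from Lemma~\ref{lem:lattice-transport}. The point is that each of the four properties is defined purely in terms of submodules, direct summands, homomorphisms, kernels, images, isomorphisms, semisimplicity, essentiality, and chains. All of these data are literally the same sets and maps for \(M\) and \({}^{\alpha}M\). It also suffices to prove the forward implications, because \(\alpha^{-1}:S\to R\) is again an anti-isomorphism. Transporting the left \(S\)-module \({}^{\alpha}M\) back along \(\alpha^{-1}\), with the symmetric definition \(r\cdot m=\alpha^{-1}\)-twisted, returns \(M\) with its original action, since \(m\,\alpha^{-1}(\alpha(r))=mr\). Alternatively, each argument below is visibly symmetric, so it proves both directions at once.

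\textbf{C4.} Suppose \(M=A\oplus B\) is a decomposition as right \(R\)-modules. By parts (1) and (3) of Lemma~\ref{lem:lattice-transport}, this is the same as a decomposition \({}^{\alpha}M=A\oplus B\) of left \(S\)-modules, with \(A,B\) having the same underlying subgroups. By part (2), the \(R\)-linear maps \(A\to B\) are exactly the \(S\)-linear maps \(A\to B\). For such a map \(f\), \(\ker f\) and \(\operatorname{Im}f\) are the same subgroups on both sides, and whether \(\ker f\ds A\) or \(\operatorname{Im}f\ds B\) holds does not depend on the side. The C4 conditions therefore coincide.

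\textbf{C4\(^*\).} The submodules of \(M\) and of \({}^{\alpha}M\) coincide, and for \(N\leq M\) we have \({}^{\alpha}N\) equal to the corresponding submodule of \({}^{\alpha}M\). The previous step applied to each \(N\) gives the equivalence.

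\textbf{Semi-weak-CS.} I would check that \(\mathcal S(M)=\mathcal S({}^{\alpha}M)\) as ordered sets. For a triple \((X,Y,f)\), the conditions that \(X,Y\) are semisimple direct summands, that \(X\cap Y=0\), and that \(f\) is an isomorphism are each side-independent by the lemma. Here an isomorphism is a bijective linear map, so part (2) applies. The order, which is coordinatewise extension, is set-theoretic. Chains therefore correspond identically, their unions are the same subgroups, and the statement that these unions are essential in direct summands is side-independent. The strong equivalence then follows by conjunction.

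The only point needing care, and the step I expect to be the main obstacle, is that ``semisimple'' and ``essential'' really are lattice-theoretic notions. A module is semisimple exactly when every submodule is a direct summand, equivalently when it is a sum of simple submodules, and simplicity is a lattice property. Essentiality means nonzero intersection with every nonzero submodule. Both are therefore determined by the submodule lattice together with the intersection operation, and these are preserved by Lemma~\ref{lem:lattice-transport}(1),(3).
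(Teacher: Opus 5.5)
Your proposal is correct and follows essentially the same route as the paper. Both reduce everything to Lemma~\ref{lem:lattice-transport}: \(\Cfour\) via decompositions, homomorphisms, kernels, images and summands; \(\Cfourstar\) by applying that to each submodule; and semi-weak-CS by identifying \(\mathcal S(M)\) with \(\mathcal S({}^{\alpha}M)\), your extra remarks (that semisimplicity and essentiality are lattice-theoretic, and the reverse direction via \(\alpha^{-1}\)) merely spelling out what the paper packs into part~(3) of that lemma.
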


\begin{proof}
The \(\Cfour\) condition is formulated using a decomposition, a
homomorphism, its kernel, its image, and the direct-summand relation.
All of these are unchanged by Lemma~\ref{lem:lattice-transport}.  The
\(\Cfour\) equivalence follows.  Applying it to every submodule gives
the \(\Cfourstar\) equivalence.

The set \(\mathcal S(M)\), its order, and the unions of its chains are
also unchanged.  Semisimplicity, essentiality, and the direct-summand
relation are preserved.  Hence the semi-weak-CS conditions are
equivalent.  Combining the last two equivalences proves the strong
statement.
\end{proof}

\begin{corollary}[Two-ring transfer]\label{cor:two-ring}
If \(\alpha:R\to S\) is an anti-isomorphism, then
\[
R\text{ is strongly right }\Cfourstar
\quad\Longleftrightarrow\quad
S\text{ is strongly left }\Cfourstar.
\]
The same assertion holds with ``strongly \(\Cfourstar\)'' replaced by
``\(\Cfourstar\)''.
\end{corollary}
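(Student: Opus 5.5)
The plan is to apply Theorem~\ref{thm:anti-transport} to the regular module. Write \(M=R_R\), the right regular module of \(R\). Then \({}^{\alpha}M\) is the additive group of \(R\) with left \(S\)-action \(s\cdot r=r\,\alpha^{-1}(s)\). The key step is to identify this left \(S\)-module with the left regular module \({}_SS\), and the candidate isomorphism is \(\alpha\) itself. Using the anti-multiplicativity of \(\alpha\), we have
\[
\alpha(s\cdot r)=\alpha\bigl(r\,\alpha^{-1}(s)\bigr)=\alpha\bigl(\alpha^{-1}(s)\bigr)\alpha(r)=s\,\alpha(r).
\]
So \(\alpha:{}^{\alpha}(R_R)\to{}_SS\) is an additive bijection that is \(S\)-linear, that is, an isomorphism of left \(S\)-modules.

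Next I will note that the properties in question are invariant under module isomorphism. This holds for \(\Cfour\), \(\Cfourstar\), semi-weak-CS and strongly \(\Cfourstar\), since each is defined purely in terms of submodules, homomorphisms, direct summands, essentiality and semisimplicity, all of which an isomorphism carries across. Combining this with Theorem~\ref{thm:anti-transport}, \(R_R\) is strongly \(\Cfourstar\) if and only if \({}^{\alpha}(R_R)\) is, if and only if \({}_SS\) is. By the ring-level definitions, this is exactly the claim that \(R\) is strongly right \(\Cfourstar\) if and only if \(S\) is strongly left \(\Cfourstar\). Running the same chain with the \(\Cfourstar\) clause of Theorem~\ref{thm:anti-transport} gives the second assertion.

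One point needs a little care. Theorem~\ref{thm:anti-transport} is stated for right \(R\)-modules turning into left \(S\)-modules, and the left-module notions of \(\Cfour\) and the rest are understood through the same definitions applied to left modules. Since the conclusion concerns the left properties of \(S\), no reverse transport through \(\alpha^{-1}\) is required. If a reverse direction were ever needed, the symmetric construction for the anti-isomorphism \(\alpha^{-1}:S\to R\) would supply it. The main obstacle, mild as it is, is verifying the linearity identity above with the correct order of factors; everything else is bookkeeping.
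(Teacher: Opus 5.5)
Your proposal is correct and follows the paper's proof: both identify \({}^{\alpha}(R_R)\) with \({}_SS\) via \(r\mapsto\alpha(r)\) and then apply Theorem~\ref{thm:anti-transport}. Your explicit check \(\alpha(s\cdot r)=s\,\alpha(r)\) and your remark that these properties are invariant under module isomorphism are details the paper leaves implicit.
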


\begin{proof}
The map
\[
{}^{\alpha}(R_R)\longrightarrow {}_SS,\qquad r\longmapsto\alpha(r),
\]
is an isomorphism of left \(S\)-modules.  Apply
Theorem~\ref{thm:anti-transport}.
\end{proof}

\begin{corollary}[Self-opposite symmetry]\label{cor:self-opposite}
If \(R\cong R^{\op}\), then the right and left \(\Cfourstar\) conditions
are equivalent, and the strongly right and strongly left
\(\Cfourstar\) conditions are equivalent.
\end{corollary}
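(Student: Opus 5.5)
The plan is to apply Corollary~\ref{cor:two-ring} to an anti-isomorphism \(\alpha:R\to R\) built from the given isomorphism \(R\cong R^{\op}\). Fix a ring isomorphism \(\beta:R\to R^{\op}\). Since \(R^{\op}\) has the same underlying set as \(R\) with reversed multiplication, the same map \(\alpha=\beta\), viewed as \(R\to R\), satisfies \(\alpha(xy)=\alpha(y)\alpha(x)\). It is additive, bijective and unital, so it is an anti-automorphism of \(R\). Thus we may take \(S=R\) in Corollary~\ref{cor:two-ring}.

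With \(S=R\), Corollary~\ref{cor:two-ring} says directly that \(R\) is strongly right \(\Cfourstar\) if and only if \(R\) is strongly left \(\Cfourstar\). Its final sentence gives the same equivalence for the plain \(\Cfourstar\) conditions. Concretely, the left \(R\)-module \({}^{\alpha}(R_R)\) is isomorphic to \({}_RR\) via \(r\mapsto\alpha(r)\). Theorem~\ref{thm:anti-transport} then moves each property from \(R_R\) to \({}^{\alpha}(R_R)\), and an isomorphism of left modules preserves each property. Both directions of each equivalence come from the ``if and only if'' in Theorem~\ref{thm:anti-transport}.

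There is essentially no obstacle. The only point needing care is bookkeeping: checking that an isomorphism \(R\to R^{\op}\) really is the same thing as an anti-automorphism of \(R\), so that the hypothesis of Corollary~\ref{cor:two-ring} holds with \(S=R\). One should also note that \(\Cfourstar\) and strong \(\Cfourstar\) are invariant under module isomorphism. This is immediate, since isomorphisms carry decompositions, kernels, images, summands, semisimple submodules and essential submodules to their counterparts.
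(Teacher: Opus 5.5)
Your proposal is correct and matches the paper's argument. The paper states this corollary without a separate proof, as the case \(S=R\) of Corollary~\ref{cor:two-ring}, with the isomorphism \(R\cong R^{\op}\) read as an anti-automorphism of \(R\), exactly as you do.
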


\begin{remark}
An involution is more than is needed in
Corollary~\ref{cor:self-opposite}.  Any anti-automorphism suffices; it
need not have order two.
\end{remark}

\section{Transfer through the residual factor}\label{sec:transfer}

We combine comparison with side-changing transport.  The semisimple
summand-square-free factors isolated by
Theorem~\ref{thm:comparison} are harmless on both sides, even when they
are not self-opposite.

\begin{definition}
A ring \(T\) is \emph{division-stably self-opposite} if
\[
T\cong D\times U,
\]
where \(D\) is a finite product of division rings and
\(U\cong U^{\op}\).  The zero ring is allowed as either factor.
\end{definition}

\begin{theorem}[Division-stable right-to-left transfer]
\label{thm:main-transfer}
Let \(R\) be a strongly right \(\Cfourstar\)-ring.  If \(T_e\) is
division-stably self-opposite for some \(e\in\Ecal(R)\), then \(R\) is
strongly left \(\Cfourstar\).
\end{theorem}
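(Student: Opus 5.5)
By Theorem~\ref{thm:residual-reduction}(2) it suffices to show that \(T_e\) is strongly left \(\Cfourstar\). Fix the decomposition \(T_e\cong D\times U\) with \(D\) a finite product of division rings and \(U\cong U^{\op}\). The left \(\Cfourstar\) and semi-weak-CS properties of a ring depend only on its isomorphism type, so we may replace \(T_e\) by \(D\times U\). Then Lemma~\ref{lem:product}(3) reduces the claim to two separate facts: \(D\) is strongly left \(\Cfourstar\), and \(U\) is strongly left \(\Cfourstar\).

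The factor \(D\) is handled directly. It is semisimple artinian, so every left submodule of \({}_DD\) is a direct summand. Hence every submodule is \(\Cfour\), since images of homomorphisms between summands are themselves summands in a semisimple module. Likewise every union of a chain in \(\mathcal S\) is a submodule, hence a summand, and is essential in itself. This is the same observation already used for \(\Sigma_e\) in the proof of Theorem~\ref{thm:residual-reduction}. Alternatively, \(D\) is square-free on the left, because its left regular module is a product of pairwise nonisomorphic simple modules with zero cross-Homs, exactly as in Lemma~\ref{lem:ssf-semisimple}. Proposition~\ref{prop:squarefree-strong} then applies.

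For \(U\) we first need the right-sided property. The ring \(T_e\) is strongly right \(\Cfourstar\) by Theorem~\ref{thm:central-splitting}(3), applied to the admissible \(e\). The same argument, via the Product lemma applied to \(R\cong\Sigma_e\times T_e\), works for any \(e\in\Ecal(R)\). A second application of Lemma~\ref{lem:product}, now to \(T_e\cong D\times U\), shows that \(U\) is strongly right \(\Cfourstar\). Since \(U\cong U^{\op}\), there is an anti-automorphism \(\alpha:U\to U\). Corollary~\ref{cor:two-ring}, or equivalently Corollary~\ref{cor:self-opposite}, then converts the strongly right \(\Cfourstar\) property of \(U\) into the strongly left one. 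Combining the two factors with Lemma~\ref{lem:product}(3) shows that \(T_e\), and hence \(R\), is strongly left \(\Cfourstar\).

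The only step requiring care is the passage from \(R\) to \(U\). One must check that \(T_e\) is strongly right \(\Cfourstar\) for the given \(e\), not merely for the splitting produced in Theorem~\ref{thm:central-splitting}. This holds because Lemma~\ref{lem:product}(2) applies to any central idempotent. One must also note that isomorphic rings share these properties, so the abstract isomorphism \(T_e\cong D\times U\) may be used. No anti-isomorphism on \(D\) is needed, which is precisely why \(D\) is treated separately.
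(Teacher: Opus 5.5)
Your proposal is correct and follows the paper's proof essentially step for step. Like the paper, you split \(T_e\cong D\times U\) with the product lemma, treat \(D\) as semisimple artinian and hence strongly \(\Cfourstar\) on both sides, convert the right-sided property of \(U\) to the left via Corollary~\ref{cor:self-opposite}, and return to \(R\) through Theorem~\ref{thm:residual-reduction}. You also get that \(T_e\) is strongly right \(\Cfourstar\) for an arbitrary admissible \(e\) by applying Lemma~\ref{lem:product} to \(R\cong\Sigma_e\times T_e\). The paper instead cites Theorem~\ref{thm:central-splitting}, which literally concerns only the idempotent constructed there, so your version is a small but welcome tightening.
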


\begin{proof}
Write \(T_e\cong D\times U\), where \(D\) is a finite product of
division rings and \(U\cong U^{\op}\).  By
Theorem~\ref{thm:central-splitting} and Lemma~\ref{lem:product}, both
\(D\) and \(U\) are strongly right \(\Cfourstar\).  The semisimple
artinian ring \(D\) is strongly \(\Cfourstar\) on both sides, while
Corollary~\ref{cor:self-opposite} makes \(U\) strongly left
\(\Cfourstar\).  The product lemma makes \(T_e\) strongly left, and
Theorem~\ref{thm:residual-reduction} gives the conclusion for \(R\).
\end{proof}

\begin{corollary}[Choice independence]\label{cor:choice-independent}
If one admissible residual \(T_e\) is division-stably self-opposite, then
every admissible residual gives right-to-left transfer for \(R\).  Under
the central ACC, right-to-left transfer follows in particular when the
canonical residual \(T_{\mathrm{can}}\) is division-stably
self-opposite.
\end{corollary}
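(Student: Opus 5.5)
The corollary makes two claims. The first is that if one admissible residual \(T_e\) is division-stably self-opposite, then \(R\) transfers from right to left, and this holds no matter which admissible idempotent is used. The second is the special case of the canonical residual. I will obtain both from Theorem~\ref{thm:main-transfer} combined with Theorem~\ref{thm:residual-reduction}, and will not need to compare the residuals themselves.

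First step: suppose \(e\in\Ecal(R)\) and \(T_e\cong D\times U\) with \(D\) a finite product of division rings and \(U\cong U^{\op}\). Theorem~\ref{thm:main-transfer} gives that \(R\) is strongly left \(\Cfourstar\).

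Second step: take any \(f\in\Ecal(R)\). By Theorem~\ref{thm:residual-reduction}(2), \(R\) is strongly left \(\Cfourstar\) if and only if \(T_f\) is strongly left \(\Cfourstar\). Hence every admissible residual \(T_f\) is strongly left \(\Cfourstar\), so the right-to-left transfer is recorded by every admissible residual, not only by \(T_e\).

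A natural extra point is whether \(T_f\) is itself division-stably self-opposite. Theorem~\ref{thm:comparison} gives
\[
T_e\cong D_{f,e}\times C_{e,f},\qquad T_f\cong D_{e,f}\times C_{e,f}.
\]
To exploit this one would have to extract a self-opposite factor from \(C_{e,f}\). That is not automatic, because cancelling the division-ring factor \(D_{f,e}\) against \(D\times U\) is delicate. This is the one real obstacle. I avoid it by reading ``gives right-to-left transfer'' as a statement about the left-sided property of \(R\) and of \(T_f\), which the second step already settles, and I claim nothing further.

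Third step, the canonical case: assume ACC on \(\Idem Z(R)\). Theorem~\ref{thm:canonical} supplies \(e_{\max}\in\Ecal(R)\) with \(T_{\mathrm{can}}=T_{e_{\max}}\). If \(T_{\mathrm{can}}\) is division-stably self-opposite, the first step applied with \(e=e_{\max}\) gives the conclusion.
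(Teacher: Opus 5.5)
Your proposal is correct and follows the paper's proof. Theorem~\ref{thm:main-transfer} gives that \(R\) is strongly left \(\Cfourstar\), Theorem~\ref{thm:residual-reduction}(2) then passes this to every admissible residual, and the canonical case is Theorem~\ref{thm:main-transfer} applied to \(e_{\max}\). You were right to avoid Theorem~\ref{thm:comparison}, since the paper also makes no claim that every \(T_f\) is itself division-stably self-opposite.
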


\begin{proof}
The first assertion follows from Theorems~\ref{thm:main-transfer} and
\ref{thm:residual-reduction}.  The final assertion is
Theorem~\ref{thm:main-transfer} applied to \(e_{\max}\).
\end{proof}

\begin{corollary}\label{cor:involutive-residual}
Right-to-left transfer holds if, for some \(e\in\Ecal(R)\), the factor
\(T_e\) is self-opposite, commutative, or admits an anti-automorphism.
\end{corollary}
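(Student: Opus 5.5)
The plan is to reduce each of the three hypotheses to Theorem~\ref{thm:main-transfer}, taking the trivial splitting \(D=0\) in the definition of division-stable self-oppositeness. Fix \(e\in\Ecal(R)\) as in the statement. The standing hypothesis is that \(R\) is a strongly right \(\Cfourstar\)-ring. "Right-to-left transfer" is read as the conclusion that \(R\) is then strongly left \(\Cfourstar\).

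First, suppose \(T_e\cong T_e^{\op}\). Then \(T_e\cong 0\times T_e\) with \(U=T_e\), and the zero ring is permitted as the factor \(D\). So \(T_e\) is division-stably self-opposite, and Theorem~\ref{thm:main-transfer} gives the conclusion. Equivalently, one can quote Corollary~\ref{cor:self-opposite} for \(T_e\), which is strongly right \(\Cfourstar\) by Theorem~\ref{thm:central-splitting}(3), and then Theorem~\ref{thm:residual-reduction}.

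Second, the remaining two hypotheses reduce to the first. If \(T_e\) is commutative, then the identity map \(T_e\to T_e^{\op}\) is a ring isomorphism, since \(xy=yx\). If \(T_e\) admits an anti-automorphism \(\beta\), then \(\beta\), viewed as a map \(T_e\to T_e^{\op}\), is a ring isomorphism. In both cases \(T_e\cong T_e^{\op}\), and the first case applies.

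There is no real obstacle here. The only point requiring care is that the strongly right \(\Cfourstar\) property passes to \(T_e\) for an arbitrary admissible \(e\), not just for the idempotent produced in Theorem~\ref{thm:central-splitting}. This follows from Lemma~\ref{lem:product} applied to the ring decomposition \(R\cong\Sigma_e\times T_e\), which exists because \(e\) is central. Theorem~\ref{thm:main-transfer} already absorbs this step, so once the zero factor \(D=0\) is allowed, the corollary is immediate.
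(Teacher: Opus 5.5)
Your proposal is correct and is the argument the paper intends; the corollary is stated there without proof. Each of the three hypotheses gives \(T_e\cong T_e^{\op}\), so \(T_e\) is division-stably self-opposite with \(D=0\), and Theorem~\ref{thm:main-transfer} (equivalently Corollary~\ref{cor:self-opposite} together with Theorem~\ref{thm:residual-reduction}) shows that \(R\) is strongly left \(\Cfourstar\). You obtain the strongly right \(\Cfourstar\) property of \(T_e\) for an arbitrary admissible \(e\) from Lemma~\ref{lem:product}, not from Theorem~\ref{thm:central-splitting}(3), which is slightly more careful than the paper's own citation.
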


\begin{remark}
Theorem~\ref{thm:main-transfer} does not require
\(D\cong D^{\op}\).  This is the point of retaining the division-ring
ambiguity in Theorem~\ref{thm:comparison} instead of imposing uniqueness
of the residual factor.  The result is also distinct from covariant
Morita invariance: it transports a right module structure to a left
module structure through an anti-isomorphism
\cite{GokavarapuMorita2026}.
\end{remark}

\section{Nonregular self-opposite residual families}
\label{sec:examples}

The preceding transfer theorem would add nothing to the regular theory
unless its hypotheses occurred in nonregular rings.  We now give a broad
source of such rings.

\begin{lemma}\label{lem:uniform-squarefree}
Every uniform module is square-free.  Consequently, every right uniform
ring is strongly right \(\Cfourstar\), and every left uniform ring is
strongly left \(\Cfourstar\).
\end{lemma}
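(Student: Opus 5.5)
The first claim follows directly from the definition of uniformity. Let \(M\) be uniform, so that every two nonzero submodules of \(M\) intersect nontrivially. Suppose \(A,B\leq M\) satisfy \(A\cap B=0\) and \(A\cong B\). Uniformity forces \(A=0\) or \(B=0\). Since \(A\cong B\), the other one is then zero as well. So \(M\) has no pair of nonzero, disjoint, isomorphic submodules, which is exactly square-freeness. I will also allow \(M=0\), which is trivially square-free, so that no case distinction about \(M\neq0\) is needed. We never need the isomorphism \(A\cong B\) beyond this last step: the intersection condition alone already kills one of the two submodules.

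For the ring statements, call a ring \(R\) right uniform when \(R_R\) is a uniform module. The first part then shows that \(R_R\) is square-free. Proposition~\ref{prop:squarefree-strong} says every square-free module is both \(\Cfourstar\) and semi-weak-CS, so \(R_R\) is strongly \(\Cfourstar\). By the ring-level definition, this means \(R\) is a strongly right \(\Cfourstar\)-ring.

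The left-handed statement is identical. Apply the same argument to \({}_RR\), since Proposition~\ref{prop:squarefree-strong} is stated for arbitrary modules and so applies to left modules too. Alternatively, pass to \(R^{\op}\) and use Corollary~\ref{cor:two-ring} with the identity anti-isomorphism \(R\to R^{\op}\).

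There is no real obstacle here. The one point to check is that Proposition~\ref{prop:squarefree-strong} is used for modules over an arbitrary ring on either side, which it is.
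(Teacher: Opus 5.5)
Your proposal is correct and matches the paper's proof: uniformity rules out two nonzero submodules with zero intersection, so the module is square-free, and Proposition~\ref{prop:squarefree-strong} applied to \(R_R\) or \({}_RR\) gives the ring statements. One small point on your optional opposite-ring route: to get the left-handed conclusion for \(R\) from Corollary~\ref{cor:two-ring}, the anti-isomorphism should be taken as \(R^{\op}\to R\). Your direct argument does not need that route.
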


\begin{proof}
Two nonzero submodules of a uniform module cannot have zero intersection.
Thus the module is square-free.  Apply
Proposition~\ref{prop:squarefree-strong}.
\end{proof}

\begin{proposition}[Self-opposite Ore domains]\label{prop:ore-domain}
Let \(D\) be a right Ore domain such that \(D\cong D^{\op}\).  Then \(D\)
is strongly \(\Cfourstar\) on both sides.  If \(D\) is not a division
ring, then \(D\) is not von Neumann regular.
\end{proposition}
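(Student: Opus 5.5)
The plan is to establish strong \(\Cfourstar\) on the right first, and then carry it to the left side. On the right, a right Ore domain \(D\) is right uniform. Indeed, if \(aD\) and \(bD\) are nonzero principal right ideals, the right Ore condition gives nonzero \(x,y\) with \(ax=by\). This element is nonzero because \(D\) has no zero divisors, so \(aD\cap bD\neq0\). Any two nonzero right ideals contain nonzero principal right ideals, hence they intersect nontrivially, and so \(D_D\) is uniform. Lemma~\ref{lem:uniform-squarefree} then makes \(D\) strongly right \(\Cfourstar\).

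For the left side, choose an isomorphism \(D\cong D^{\op}\). Equivalently, this is an anti-automorphism of \(D\). Corollary~\ref{cor:self-opposite} then shows that strongly right \(\Cfourstar\) is equivalent to strongly left \(\Cfourstar\), so \(D\) is strongly \(\Cfourstar\) on both sides. A more direct route is also available: transporting the right Ore condition through the anti-isomorphism shows that \(D\) is left Ore, and hence left uniform. Either argument suffices, and I will use the corollary since it is already stated.

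For non-regularity, suppose \(D\) is von Neumann regular. Take any nonzero \(a\in D\) and choose \(x\) with \(axa=a\). Then \(a(xa-1)=0\), and since \(D\) is a domain with \(a\neq0\), this gives \(xa=1\). Similarly \((ax-1)a=0\) gives \(ax=1\). So every nonzero element is invertible and \(D\) is a division ring. Taking the contrapositive, if \(D\) is not a division ring then \(D\) is not regular.

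None of these steps poses a real obstacle. The only point that needs care is the Ore-to-uniform step, specifically checking that the common multiple \(ax=by\) is nonzero. This follows at once from \(D\) being a domain.
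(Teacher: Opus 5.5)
Your proposal is correct and follows the paper's proof essentially step for step. The right Ore condition makes \(D_D\) uniform, Lemma~\ref{lem:uniform-squarefree} gives strong right \(\Cfourstar\), Corollary~\ref{cor:self-opposite} carries this to the left side, and the same cancellation argument (\(axa=a\) gives \(xa=1\) and \(ax=1\)) shows that a regular domain is a division ring.
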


\begin{proof}
Let \(I,J\) be nonzero right ideals.  Choose \(0\neq a\in I\) and
\(0\neq b\in J\).  The right Ore condition gives nonzero \(u,v\in D\)
such that
\[
au=bv\neq0.
\]
Thus \(I\cap J\neq0\), and \(D_D\) is uniform.  It is strongly right
\(\Cfourstar\) by Lemma~\ref{lem:uniform-squarefree}.  Self-oppositeness
and Corollary~\ref{cor:self-opposite} give the left-handed conclusion.

A von Neumann regular domain is a division ring.  Indeed, if
\(0\neq a=aba\), cancellation gives \(ba=1\), and then
\((ab-1)a=0\) gives \(ab=1\).  Hence a nondivision domain cannot be
regular.
\end{proof}

We give a noncommutative family.  The standard facts about skew Laurent
extensions used in the proof are recalled in
\cite{GoodearlWarfield2004,McConnellRobson2001}.

\begin{theorem}[Skew Laurent family]\label{thm:skew-laurent}
Let \(K\) be a field and let \(\sigma\in\operatorname{Aut}(K)\).  Put
\[
L=K[x,x^{-1};\sigma],
\qquad xa=\sigma(a)x\quad(a\in K).
\]
Then:
\begin{enumerate}
\item \(L\) admits the involution determined by
      \(a^\dagger=a\) for \(a\in K\) and \(x^\dagger=x^{-1}\);
\item \(L\) is strongly \(\Cfourstar\) on both sides;
\item \(L\) is not von Neumann regular.
\end{enumerate}
If \(\sigma\neq\id_K\), then \(L\) is noncommutative.
\end{theorem}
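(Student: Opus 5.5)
For (1), I will define the map \(\dagger\) on \(L\) by
\[
\Bigl(\sum_i a_i x^i\Bigr)^\dagger=\sum_i x^{-i}a_i=\sum_i \sigma^{-i}(a_i)\,x^{-i}.
\]
\(L\) is a free left \(K\)-module on \(\{x^i:i\in\mathbb Z\}\), so this is well defined and additive. The real content is that it reverses products. Because the map is additive, it suffices to check this on monomials. For \(ax^i\) and \(bx^j\) we have
\[
(ax^i\cdot bx^j)^\dagger=(a\sigma^i(b)x^{i+j})^\dagger=x^{-i-j}\,\sigma^i(b)\,a .
\]
On the other side,
\[
(bx^j)^\dagger(ax^i)^\dagger=x^{-j}\,b\,x^{-i}\,a=x^{-j}x^{-i}\,\sigma^{i}(b)\,a,
\]
using \(bx^{-i}=x^{-i}\sigma^{i}(b)\). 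The two expressions agree because \(K\) is commutative. Applying \(\dagger\) twice gives the identity, so \(\dagger\) is an involution. In particular \(L\cong L^{\op}\).

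For (2), I will apply Proposition~\ref{prop:ore-domain}. First, \(L\) is a domain: leading terms multiply as \(ax^i\cdot bx^j=a\sigma^i(b)x^{i+j}\neq0\). Next, \(L\) is right noetherian, by the standard skew Laurent version of the Hilbert basis theorem with \(\sigma\) an automorphism \cite{GoodearlWarfield2004,McConnellRobson2001}. A right noetherian domain is right Ore: if \(aL\cap bL=0\) with \(a,b\neq0\), then the sum \(\sum_n b a^n L\) is direct, which contradicts the ascending chain condition. The proposition together with (1) then gives the strong \(\Cfourstar\) property on both sides.

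This Ore step is the main point needing care. If I want to avoid citing noetherianity, I will argue with degree or length directly. Alternatively, I can use the left/right symmetry supplied by the involution, so that only one side has to be established.

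For (3), by Proposition~\ref{prop:ore-domain} it is enough to show that \(L\) is not a division ring. I will show that \(1+x\) is not invertible. Define the length of a nonzero element as the difference between its highest and lowest exponents. Length is additive in a domain of this graded type, since the top and bottom terms multiply to nonzero terms. The element \(1+x\) has length \(1\), whereas every unit has length \(0\), because \(uv=1\) forces the lengths to sum to \(0\).

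Finally, if \(\sigma\neq\id_K\), I pick \(a\in K\) with \(\sigma(a)\neq a\). Then \(xa=\sigma(a)x\neq ax\), so \(L\) is noncommutative.
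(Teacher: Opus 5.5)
Your proposal follows the paper's proof step for step. The involution gives $L\cong L^{\op}$; a noetherian domain is Ore, so Proposition~\ref{prop:ore-domain} applies; additivity of the width (your ``length'') makes $1+x$ a nonunit, so $L$ is not a division ring; and $xa=\sigma(a)x$ gives noncommutativity. Your part (1) is more careful than the paper's: you define $\dagger$ on the $K$-basis $\{x^i\}$ and check anti-multiplicativity on monomials, rather than only checking the defining relation. You also rightly note that only the right Ore condition is needed once $L\cong L^{\op}$.

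One step is wrong as written: your argument that a right noetherian domain is right Ore. The sum $\sum_{n\ge0} ba^nL$ is never direct. Every term $ba^nL$ lies inside the $n=0$ term $bL$, so the sum is just $bL$; explicitly, $b\cdot a+ba\cdot(-1)=0$ is a nontrivial relation between the first two terms.

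The standard argument puts the powers of $a$ on the left. Suppose $aL\cap bL=0$ with $a,b\neq0$; then $\sum_{n\ge0}a^nbL$ is direct. To see this, take a relation $\sum_{n=k}^m a^nbr_n=0$ with $r_k\neq0$. Cancelling $a^k$ (you are in a domain) gives $br_k=-a(\cdots)\in aL\cap bL=0$, so $r_k=0$, a contradiction. The nonzero terms $a^nbL$ then give a strictly ascending chain of right ideals, contradicting right noetherianity. With this fix, or by simply citing the standard fact as the paper does, your proof is complete.
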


\begin{proof}
The defining relation is preserved anti-multiplicatively because
\[
(xa)^\dagger=ax^{-1}
\quad\text{and}\quad
(\sigma(a)x)^\dagger=x^{-1}\sigma(a)=ax^{-1}.
\]
Thus the indicated rule extends to an anti-automorphism of \(L\), and it
squares to the identity on the generators.

The ring \(L\) is a left and right noetherian domain.  Hence it is a left
and right Ore domain.  Proposition~\ref{prop:ore-domain} gives the
strong \(\Cfourstar\) property on both sides.

It remains to exclude regularity.  For a nonzero Laurent polynomial
\(f\), let \(w(f)\) be the difference between its largest and smallest
exponents.  The domain property and the automorphism hypothesis give
\[
w(fg)=w(f)+w(g).
\]
Every unit therefore has width zero and is of the form \(ax^n\) with
\(a\in K^\times\).  The element \(1+x\) is not a unit.  Hence \(L\) is
not a division ring and, by Proposition~\ref{prop:ore-domain}, is not
regular.  Noncommutativity for nontrivial \(\sigma\) follows from
\(xa=\sigma(a)x\).
\end{proof}

\begin{corollary}[Nonregular central products]\label{cor:nonregular-product}
Let \(\Sigma\) be any semisimple artinian ring and let \(L\) be as in
Theorem~\ref{thm:skew-laurent}.  Then
\[
R=\Sigma\times L
\]
is a nonregular ring which is strongly \(\Cfourstar\) on both sides.
\end{corollary}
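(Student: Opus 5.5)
The plan is to combine the product lemma with the two facts already established about the factors. By Theorem~\ref{thm:skew-laurent}(2), \(L\) is strongly \(\Cfourstar\) on both sides. A semisimple artinian ring \(\Sigma\) is strongly \(\Cfourstar\) on both sides. The reason is that every one-sided module over \(\Sigma\) is semisimple, so every submodule is a direct summand. The \(\Cfour\) condition is then automatic, since every image is a summand; this passes to all submodules, giving \(\Cfourstar\). For semi-weak-CS, every submodule is itself a direct summand, so in particular the unions of coordinates are essential in themselves as summands. This is the same observation already invoked in the proof of Theorem~\ref{thm:residual-reduction}. Lemma~\ref{lem:product}(2) and (3) then give that \(R=\Sigma\times L\) is strongly right and strongly left \(\Cfourstar\).

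For nonregularity, I would argue that von Neumann regularity of a direct product forces regularity of each factor. The projection \(R\to L\) is a surjective ring homomorphism. If \(r=rsr\) in \(R\), projecting gives the same identity in \(L\), and every element of \(L\) lifts, for instance to \((0,\ell)\). So if \(R\) were regular, \(L\) would be regular. This contradicts Theorem~\ref{thm:skew-laurent}(3).

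No step is a genuine obstacle. The only point deserving care is stating explicitly why semisimple artinian rings satisfy the semi-weak-CS condition: a union of submodules is a submodule, hence a direct summand and trivially essential in itself. Everything else is a direct citation of Lemma~\ref{lem:product} and Theorem~\ref{thm:skew-laurent}.
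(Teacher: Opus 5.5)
Your proof is correct and follows the paper's argument. Both factors are strongly \(\Cfourstar\) on both sides, so Lemma~\ref{lem:product} applies, and \(R\) is nonregular because its direct factor \(L\) is; you additionally spell out the routine details the paper leaves implicit, namely why semisimple artinian rings are strongly \(\Cfourstar\) and why regularity passes to the projection onto \(L\).
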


\begin{proof}
The factors are strongly \(\Cfourstar\) on both sides, so the product
lemma applies.  Since the direct factor \(L\) is not regular, neither is
\(R\).
\end{proof}

\begin{example}[Movable division layers]\label{ex:movable-layers}
Let \(\Sigma\) be semisimple artinian, let
\[
D=D_1\times\cdots\times D_n
\]
be a finite product of division rings, and let
\(L=K[x,x^{-1};\sigma]\) be as in
Theorem~\ref{thm:skew-laurent}.  In
\[
R=\Sigma\times D\times L
\]
let \(e\) be the identity of \(\Sigma\) and let \(f\) be the identity of
\(\Sigma\times D\).  Then \(e,f\in\Ecal(R)\), \(e\leq f\), and
\[
T_e\cong D\times L,\qquad T_f\cong L.
\]
Indeed, \(D_D\) is summand-square-free by
Lemma~\ref{lem:ssf-semisimple}, and \(L_L\) is square-free by
Lemma~\ref{lem:uniform-squarefree}.  Direct summands and homomorphisms
over \(D\times L\) are computed componentwise, so
\((D\times L)_{D\times L}\) is summand-square-free.
Thus a division-ring layer may be placed either in the semisimple factor
or in the residual factor.  Corollary~\ref{cor:comparable} shows that
this is the only type of ambiguity possible.  If \(L\) is nonregular,
then so is \(R\), while \(R\) is strongly \(\Cfourstar\) on both sides.
\end{example}

\begin{example}
Let \(k\) be a field and take
\[
K=k(t_i\mid i\in\mathbb Z),
\qquad \sigma(t_i)=t_{i+1}.
\]
Then \(\sigma\) is a nontrivial automorphism of \(K\), and
\(K[x,x^{-1};\sigma]\) in
Theorem~\ref{thm:skew-laurent} is a concrete noncommutative,
nonregular, two-sided strongly \(\Cfourstar\)-ring.  This family lies
strictly outside the regular boundary of
\cite[Proposition~4.8]{IbrahimEidElGuindy2026}.
\end{example}

\section{Exact reduction of one-sided defects}
\label{sec:obstruction}

We now show that central splitting and opposite-ring transport locate
every possible failure of right-to-left transfer.

\begin{theorem}[Residual-core theorem]\label{thm:residual-core}
Let \(R\) be strongly right \(\Cfourstar\) but not strongly left
\(\Cfourstar\).  For every \(e\in\Ecal(R)\), the residual factor
\(T_e\) satisfies:
\begin{enumerate}
\item \((T_e)_{T_e}\) is summand-square-free;
\item \(T_e\) is strongly right \(\Cfourstar\);
\item \(T_e\) is not strongly left \(\Cfourstar\);
\item \(T_e\not\cong T_e^{\op}\);
\item \(T_e\) is not division-stably self-opposite.
\end{enumerate}
If \(e,f\in\Ecal(R)\), the two defective residuals have the common factor
\(C_{e,f}\) of Theorem~\ref{thm:comparison}, and their other factors are
finite products of division rings.  Conversely, if a ring \(T\) is
strongly right but not strongly left \(\Cfourstar\), then
\(\Sigma\times T\) has the same one-sided defect for every semisimple
artinian ring \(\Sigma\).
\end{theorem}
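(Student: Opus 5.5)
The plan is to derive every item from results already established, treating the five numbered properties in order, then the comparison clause, then the converse.

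Fix \(e\in\Ecal(R)\).

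\begin{enumerate}
\item Item (1) is part of the definition of \(\Ecal(R)\).
\item For item (2), we have \(R\cong\Sigma_e\times T_e\) by centrality of \(e\), so Lemma~\ref{lem:product} transfers the strongly right \(\Cfourstar\) property from \(R\) to the factor \(T_e\).
\item Item (3) is the contrapositive of Theorem~\ref{thm:residual-reduction}(2). If \(T_e\) were strongly left \(\Cfourstar\), then so would be \(R\), contrary to hypothesis.
\item Item (4) follows from Corollary~\ref{cor:self-opposite}. If \(T_e\cong T_e^{\op}\), then the strongly right property of \(T_e\) from (2) would give the strongly left property, contradicting (3).
\item Item (5) is the contrapositive of Theorem~\ref{thm:main-transfer}. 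A division-stably self-opposite \(T_e\) would force \(R\) to be strongly left \(\Cfourstar\).
\end{enumerate}

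Items (4) and (5) need not be handled separately. Taking \(D=0\) in the definition shows that self-opposite rings are division-stably self-opposite, so (5) implies (4).

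The comparison clause is a restatement of Theorem~\ref{thm:comparison}. That theorem gives \(T_e\cong D_{f,e}\times C_{e,f}\) and \(T_f\cong D_{e,f}\times C_{e,f}\), with \(D_{e,f}\) and \(D_{f,e}\) finite products of division rings. By (3), applied to both \(e\) and \(f\), both residuals are defective.

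For the converse, let \(T\) be strongly right but not strongly left \(\Cfourstar\), and let \(\Sigma\) be semisimple artinian. A semisimple artinian ring is strongly \(\Cfourstar\) on both sides: every module over it is semisimple, so every short exact sequence splits. Hence images are always summands, giving \(\Cfour\) for every submodule. Also every submodule is a summand, so the unions in any chain of \(\mathcal S\) are summands, giving semi-weak-CS. Then Lemma~\ref{lem:product}(2) shows that \(\Sigma\times T\) is strongly right \(\Cfourstar\). By Lemma~\ref{lem:product}(3), it is not strongly left \(\Cfourstar\), since its factor \(T\) is not.

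None of these steps is a serious obstacle. The only point that needs care is the fact, used in the converse and in Theorem~\ref{thm:residual-reduction}, that semisimple artinian rings are strongly \(\Cfourstar\) on both sides; the splitting argument above justifies it briefly.
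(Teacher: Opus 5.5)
Your proposal is correct and follows the paper's proof essentially step for step. Items (1)--(5) come from the definition of \(\Ecal(R)\), Lemma~\ref{lem:product}, Theorem~\ref{thm:residual-reduction}, Corollary~\ref{cor:self-opposite} and Theorem~\ref{thm:main-transfer}, the comparison clause from Theorem~\ref{thm:comparison}, and the converse from the product lemma; your two additions, the splitting argument that semisimple artinian rings are strongly \(\Cfourstar\) on both sides and the remark that (5) implies (4) by taking \(D=0\), are valid points the paper leaves implicit and do not change the route.
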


\begin{proof}
Item (1) is part of the definition of \(\Ecal(R)\), and item (2) follows
from \(R\cong eR\times T_e\) and Lemma~\ref{lem:product}.  If \(T_e\)
were strongly left
\(\Cfourstar\), then \(R\) would be strongly left by
Theorem~\ref{thm:residual-reduction}; this proves (3).  If
\(T_e\cong T_e^{\op}\), then (2) and
Corollary~\ref{cor:self-opposite} would contradict (3).  Thus (4) holds.
Item (5) follows directly from
Theorem~\ref{thm:main-transfer}.  The common-factor statement is
Theorem~\ref{thm:comparison}.

For the converse, the semisimple ring \(\Sigma\) is strongly
\(\Cfourstar\) on both sides.  Lemma~\ref{lem:product} shows that
\(\Sigma\times T\) is strongly right but not strongly left.
\end{proof}

\begin{corollary}[Reduction principle]\label{cor:reduction-principle}
The classification of strongly right \(\Cfourstar\)-rings which are not
strongly left \(\Cfourstar\) reduces exactly to the same classification
among right summand-square-free rings.  Semisimple direct factors neither
create nor remove the defect.
\end{corollary}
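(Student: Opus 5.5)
The plan is to treat Corollary~\ref{cor:reduction-principle} as a repackaging of Theorem~\ref{thm:residual-core}. I would make precise what ``reduces exactly'' means by exhibiting two mutually inverse passages between the two classes. Let \(\mathcal D\) denote the strongly right but not strongly left \(\Cfourstar\)-rings, and let \(\mathcal D_0\subseteq\mathcal D\) be those whose right regular module is summand-square-free.

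\textbf{Forward passage.} Given \(R\in\mathcal D\), I would use Theorem~\ref{thm:central-splitting} to pick \(e\in\Ecal(R)\) and write \(R\cong\Sigma_e\times T_e\). Items (1)--(3) of Theorem~\ref{thm:residual-core} then place \(T_e\) in \(\mathcal D_0\). By Theorem~\ref{thm:comparison}, the residual is determined up to finite products of division rings, so the assignment \(R\mapsto T_e\) is well defined modulo that ambiguity. Under the central ACC, Theorem~\ref{thm:canonical} makes it canonical by taking \(T_{\mathrm{can}}\).

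\textbf{Backward passage.} Given \(T\in\mathcal D_0\) and any semisimple artinian \(\Sigma\), the converse clause of Theorem~\ref{thm:residual-core} puts \(\Sigma\times T\) in \(\mathcal D\). So every member of \(\mathcal D\) has the form \(\Sigma\times T\) with \(T\in\mathcal D_0\), and every such product lies in \(\mathcal D\). This gives an exact reduction: \(\mathcal D\) consists precisely of the products of semisimple artinian rings with members of \(\mathcal D_0\).

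\textbf{The second sentence.} Here I would invoke Theorem~\ref{thm:residual-reduction} together with Lemma~\ref{lem:product}. For any ring \(T\) and semisimple artinian \(\Sigma\), \(\Sigma\) is strongly \(\Cfourstar\) on both sides. Hence \(\Sigma\times T\) is strongly right, respectively left, \(\Cfourstar\) if and only if \(T\) is. Adjoining or deleting a semisimple factor therefore neither creates nor removes the defect.

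\textbf{Main obstacle.} The only delicate point is the residual passed to in the forward direction. It must have summand-square-free right regular module as a ring, and not merely as the \(R\)-module \((1-e)R\). This is exactly item (2) of Theorem~\ref{thm:central-splitting}, via the coincidence of \(R\)- and \(T\)-submodules. Everything else is bookkeeping.
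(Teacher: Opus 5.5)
Your proposal is correct and follows the paper's route: the paper gives no separate proof and treats this corollary as an immediate consequence of Theorem~\ref{thm:residual-core}. Items (1)--(3) there give your forward passage, its converse clause gives your backward passage, and the product lemma together with the two-sided strong \(\Cfourstar\) property of semisimple artinian rings gives the second sentence. Your remarks on choice-independence via Theorem~\ref{thm:comparison}, and on canonicality under the central ACC, are consistent with the paper's framing but are not needed for the statement itself.
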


The known skew-polynomial separation shows that the residual-core theorem
is not vacuous.

\begin{proposition}[Sharp one-sided residual]\label{prop:sharp-residual}
Let \(K\) be a field admitting an injective nonsurjective endomorphism
\(\sigma\), and put
\[
A=K[x;\sigma].
\]
For the orientation in which
\cite[Example~4.12]{IbrahimEidElGuindy2026} makes \(A\) strongly left
\(\Cfourstar\) but not strongly right \(\Cfourstar\), the opposite ring
\[
R=A^{\op}
\]
is strongly right but not strongly left \(\Cfourstar\).  Moreover,
\(R_R\) is summand-square-free, \(R\not\cong R^{\op}\), and \(R\) is not
division-stably self-opposite.
\end{proposition}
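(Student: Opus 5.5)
The plan has three steps: transport the cited one-sided behaviour of \(A\) to \(R=A^{\op}\), show \(R_R\) is summand-square-free, and then deduce the two non-self-oppositeness claims from Theorem~\ref{thm:residual-core}.

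\textbf{Step 1: transport.} The identity map \(A\to R=A^{\op}\) is an anti-isomorphism. The same holds for \(R\to A\). Corollary~\ref{cor:two-ring}, applied to \(\alpha:R\to A\), gives that \(R\) is strongly right \(\Cfourstar\) if and only if \(A\) is strongly left \(\Cfourstar\); the latter holds by the chosen orientation of \cite[Example~4.12]{IbrahimEidElGuindy2026}. Applied to \(\alpha:A\to R\), it gives that \(R\) is strongly left \(\Cfourstar\) if and only if \(A\) is strongly right \(\Cfourstar\); the latter fails. So \(R\) is strongly right but not strongly left \(\Cfourstar\).

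\textbf{Step 2: \(R_R\) is summand-square-free.} I would argue directly that the only idempotents of \(R\) are \(0\) and \(1\). The ring \(A=K[x;\sigma]\) is a domain: since \(\sigma\) is injective, leading coefficients multiply to nonzero elements, so degrees add. Hence \(A^{\op}\) is also a domain, and an idempotent \(e\) satisfies \(e(1-e)=0\), forcing \(e\in\{0,1\}\). Consequently \(R_R\) is indecomposable. It therefore has no two nonzero direct summands with zero intersection, and summand-square-freeness holds vacuously.

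Alternatively, one could invoke Theorem~\ref{thm:central-splitting}. There \(\Ecal(R)\) is nonempty. The only central idempotents are \(0\) and \(1\), and \(e=1\) would make \(R\) semisimple artinian, hence strongly left \(\Cfourstar\), which Step 1 rules out. So \(0\in\Ecal(R)\) and \(T_0=R\) is summand-square-free. This route also identifies \(R\) as its own residual, with \(\Ecal(R)=\{0\}\).

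\textbf{Step 3: the residual-core conclusions.} Since \(0\in\Ecal(R)\) and \(T_0=R\), Theorem~\ref{thm:residual-core}(4) and (5) give \(R\not\cong R^{\op}\) and that \(R\) is not division-stably self-opposite. As a direct check, if \(R\cong D\times U\) as in the definition, Theorem~\ref{thm:main-transfer} with \(e=0\) would make \(R\) strongly left \(\Cfourstar\), contradicting Step 1.

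\textbf{Main obstacle.} The only delicate point is bookkeeping in Step 1: using the anti-isomorphism in the correct direction, so that the left/right orientation of the cited example is matched correctly. Everything else follows formally from results already proved in the paper.
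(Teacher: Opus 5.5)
Your proposal is correct and follows the paper's proof in substance. Opposite-ring transport gives the one-sided behaviour, the domain property rules out nontrivial idempotents so that \(R_R\) is summand-square-free (with \(R\) its own residual), and the transfer results then exclude self-oppositeness. The only small difference is in the last claim, where you apply Theorem~\ref{thm:main-transfer} with \(e=0\) directly, while the paper observes that a nonsemisimple domain has no nontrivial product decomposition, so division-stable self-oppositeness collapses to self-oppositeness; both arguments are valid.
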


\begin{proof}
Passing to the opposite ring interchanges the two sides, so the first
assertion follows from the cited example.  The skew polynomial ring
\(A\) is a domain.  Hence \(R\) has no nontrivial idempotents.  Apply
Theorem~\ref{thm:central-splitting} to \(R\).  Its central semisimple
factor is either \(0\) or \(R\).  The second possibility would make
\(R\) semisimple artinian, which it is not.  Therefore the residual
factor is all of \(R\), and \(R_R\) is summand-square-free.  Finally,
self-oppositeness would force left--right symmetry by
Corollary~\ref{cor:self-opposite}, a contradiction.  Since \(R\) is a
nonsemisimple domain, it has no nontrivial product decomposition and no
nonzero semisimple artinian direct factor.  Division-stable
self-oppositeness would therefore reduce to self-oppositeness, which has
just been excluded.
\end{proof}

\begin{remark}
The contrast between Theorem~\ref{thm:skew-laurent} and
Proposition~\ref{prop:sharp-residual} is structural.  When \(\sigma\) is
an automorphism and \(x\) is inverted, the skew Laurent ring possesses
an involution and the two sides agree.  When \(\sigma\) is injective but
not surjective in the one-sided skew polynomial construction, the known
example loses that opposite-ring symmetry and supports a one-sided
defect.
\end{remark}

\section{Conclusion}

The semisimple summand in the strong \(\Cfourstar\) decomposition of
\(R_R\) is not merely a module-theoretic summand.  Projectivity of the
regular module supplies Hom-vanishing in both directions.  The associated
cross-corners vanish, the defining idempotent is central, and the
decomposition is a ring product.

The new comparison theorem determines the nonuniqueness of this product:
two residual factors share a central core and differ only by finite
products of division rings.  Hence the residual transfer defect is an
intrinsic invariant even when no preferred decomposition exists.  Under
ACC on central idempotents, the admissible splittings have a greatest
member and give a canonical residual.  The infinite product example
marks the precise point at which this canonicality can fail.

Anti-isomorphism transport then yields a nonregular symmetry principle.
A residual which is self-opposite after removal of a finite
division-ring factor cannot support a one-sided strong
\(\Cfourstar\) defect.  The skew Laurent examples show that the criterion
contains noncommutative nonregular rings, whereas the opposite of the
known skew-polynomial example shows that genuine one-sided residuals
remain.

The remaining problem is now sharply localized: classify the
summand-square-free strongly right \(\Cfourstar\)-rings that are not
division-stably self-opposite and determine which fail the left strong
condition.  Semisimple factors and all ambiguity coming from division
rings have been separated from that question.

\section*{Acknowledgment}

The authors thank the Commissioner of Higher Education, Government of
Andhra Pradesh, India, and the Principal, Government College
(Autonomous), Rajahmundry, for providing a supportive academic
environment.

\section*{Declarations}

\noindent\textbf{Author contributions.}
All authors contributed to the mathematical discussions, verification,
preparation, and revision of the manuscript.  All authors reviewed and
approved the final version.\par

\smallskip
\noindent\textbf{Funding.}
The authors received no financial support for this research.\par

\smallskip
\noindent\textbf{Conflict of interest.}
The authors declare no conflict of interest.\par

\smallskip
\noindent\textbf{Data availability.}
No datasets were generated or analysed; the article is purely
theoretical.\par

\section*{ORCID}

\noindent Chandrasekhar Gokavarapu -
\url{https://orcid.org/0009-0006-5306-371X}

\bibliographystyle{amsplain}
\bibliography{references}

@article{AltunIbrahimOzcanYousif2018,
  author  = {Altun-{\"O}zarslan, Meltem and Ibrahim, Yasser and {\"O}zcan, A. {\c C}. and Yousif, Mohamed},
  title   = {{$C4$}- and {$D4$}-modules via perspective direct summands},
  journal = {Communications in Algebra},
  volume  = {46},
  number  = {10},
  pages   = {4480--4497},
  year    = {2018},
  doi     = {10.1080/00927872.2018.1448838},
  note    = {\url{https://doi.org/10.1080/00927872.2018.1448838}}
}

@article{DingIbrahimYousifZhou2017,
  author  = {Ding, Nanqing and Ibrahim, Yasser and Yousif, Mohamed and Zhou, Yiqiang},
  title   = {{$C4$}-modules},
  journal = {Communications in Algebra},
  volume  = {45},
  number  = {4},
  pages   = {1727--1740},
  year    = {2017},
  doi     = {10.1080/00927872.2016.1222412},
  note    = {\url{https://doi.org/10.1080/00927872.2016.1222412}}
}

@article{CamilloYu1994,
  author  = {Camillo, Victor P. and Yu, Hua-Ping},
  title   = {Exchange rings, units and idempotents},
  journal = {Communications in Algebra},
  volume  = {22},
  number  = {12},
  pages   = {4737--4749},
  year    = {1994},
  doi     = {10.1080/00927879408825098},
  note    = {\url{https://doi.org/10.1080/00927879408825098}}
}

@article{CrawleyJonsson1964,
  author  = {Crawley, Peter and J{\'o}nsson, Bjarni},
  title   = {Refinements for infinite direct decompositions of algebraic systems},
  journal = {Pacific Journal of Mathematics},
  volume  = {14},
  number  = {3},
  pages   = {797--855},
  year    = {1964},
  doi     = {10.2140/pjm.1964.14.797},
  note    = {\url{https://doi.org/10.2140/pjm.1964.14.797}}
}

@incollection{DialloDiopKourkiTribak2025,
  author    = {Diallo, Abdoul Djibril and Diop, Papa Cheikhou and Kourki, Farid and Tribak, Rachid},
  title     = {On a Generalization of {$C_4$}-Modules},
  booktitle = {Algebra and Its Applications},
  editor    = {Patel, Manoj Kumar and Ashraf, Mohammad and Mahdou, Najib and Kim, Hwankoo},
  series    = {Springer Proceedings in Mathematics \& Statistics},
  volume    = {474},
  pages     = {387--404},
  publisher = {Springer},
  address   = {Singapore},
  year      = {2025},
  doi       = {10.1007/978-981-97-6798-4_30},
  note      = {\url{https://doi.org/10.1007/978-981-97-6798-4_30}}
}

@article{DingIbrahimYousifZhouD42017,
  author  = {Ding, Nanqing and Ibrahim, Yasser and Yousif, Mohamed and Zhou, Yiqiang},
  title   = {{$D4$}-modules},
  journal = {Journal of Algebra and Its Applications},
  volume  = {16},
  number  = {9},
  pages   = {1750166},
  year    = {2017},
  doi     = {10.1142/S0219498817501663},
  note    = {\url{https://doi.org/10.1142/S0219498817501663}}
}

@article{EbrahimiAtaniKhoramdelHesari2016,
  author  = {Ebrahimi Atani, Shahabaddin and Khoramdel, Mehdi and Dolati Pish Hesari, Saboura},
  title   = {{$C3$}-Modules},
  journal = {Demonstratio Mathematica},
  volume  = {49},
  number  = {3},
  pages   = {282--292},
  year    = {2016},
  doi     = {10.1515/dema-2016-0024},
  note    = {\url{https://doi.org/10.1515/dema-2016-0024}}
}

@book{Facchini1998,
  author    = {Facchini, Alberto},
  title     = {Module Theory: Endomorphism Rings and Direct Sum Decompositions in Some Classes of Modules},
  series    = {Progress in Mathematics},
  volume    = {167},
  publisher = {Birkh{\"a}user},
  address   = {Basel},
  year      = {1998},
  doi       = {10.1007/978-3-0348-8774-8},
  note      = {\url{https://doi.org/10.1007/978-3-0348-8774-8}}
}

@article{Facchini2012,
  author  = {Facchini, Alberto},
  title   = {Direct-sum decompositions of modules with semilocal endomorphism rings},
  journal = {Bulletin of Mathematical Sciences},
  volume  = {2},
  number  = {2},
  pages   = {225--279},
  year    = {2012},
  doi     = {10.1007/s13373-012-0024-9},
  note    = {\url{https://doi.org/10.1007/s13373-012-0024-9}}
}

@article{GargGroverKhurana2014,
  author  = {Garg, Shelly and Grover, Harpreet K. and Khurana, Dinesh},
  title   = {Perspective rings},
  journal = {Journal of Algebra},
  volume  = {415},
  pages   = {1--12},
  year    = {2014},
  doi     = {10.1016/j.jalgebra.2013.09.055},
  note    = {\url{https://doi.org/10.1016/j.jalgebra.2013.09.055}}
}

@book{GoodearlWarfield2004,
  author    = {Goodearl, K. R. and Warfield, Jr., Robert B.},
  title     = {An Introduction to Noncommutative Noetherian Rings},
  edition   = {Second},
  series    = {London Mathematical Society Student Texts},
  volume    = {61},
  publisher = {Cambridge University Press},
  address   = {Cambridge},
  year      = {2004},
  doi       = {10.1017/CBO9780511841699},
  note      = {\url{https://doi.org/10.1017/CBO9780511841699}}
}

@article{GuilAsensioSrivastava2013,
  author  = {Guil Asensio, Pedro A. and Srivastava, Ashish K.},
  title   = {Automorphism-invariant modules satisfy the exchange property},
  journal = {Journal of Algebra},
  volume  = {388},
  pages   = {101--106},
  year    = {2013},
  doi     = {10.1016/j.jalgebra.2013.05.003},
  note    = {\url{https://doi.org/10.1016/j.jalgebra.2013.05.003}}
}

@misc{GokavarapuMorita2026,
  author        = {Gokavarapu, Chandrasekhar},
  title         = {Morita Invariance, Categorical Obstructions, and
                   Dimension Transfer for {$C4$}, {$C4^{\ast}$},
                   Strongly {$C4^{\ast}$}, and Semi-Weak-CS Modules},
  year          = {2026},
  eprint        = {2604.16326},
  archiveprefix = {arXiv},
  primaryclass  = {math.RA},
  doi           = {10.48550/arXiv.2604.16326},
  note          = {arXiv:2604.16326,
                   \url{https://doi.org/10.48550/arXiv.2604.16326}}
}

@misc{GokavarapuTransfer2026,
  author        = {Gokavarapu, Chandrasekhar},
  title         = {Left--right Transfer for {$C4^{\ast}$}-Rings Beyond
                   the Regular Case},
  year          = {2026},
  eprint        = {2605.04053},
  archiveprefix = {arXiv},
  primaryclass  = {math.RA},
  doi           = {10.48550/arXiv.2605.04053},
  note          = {arXiv:2605.04053, version 1; superseded by the present
                   manuscript,
                   \url{https://doi.org/10.48550/arXiv.2605.04053}}
}

@article{IbrahimEidElGuindy2026,
  author  = {Ibrahim, Yasser and Eid, Hussein and El-Guindy, Ahmad},
  title   = {On {$C4$}-modules},
  journal = {Journal of Algebra and Its Applications},
  volume  = {25},
  number  = {7},
  pages   = {2650061},
  year    = {2026},
  doi     = {10.1142/S0219498826500611},
  note    = {\url{https://doi.org/10.1142/S0219498826500611}}
}

@article{IbrahimYousif2018,
  author  = {Ibrahim, Yasser and Yousif, Mohamed},
  title   = {Rings all of whose right ideals are {$U$}-modules},
  journal = {Communications in Algebra},
  volume  = {46},
  number  = {5},
  pages   = {1983--1995},
  year    = {2018},
  doi     = {10.1080/00927872.2017.1365881},
  note    = {\url{https://doi.org/10.1080/00927872.2017.1365881}}
}

@article{IbrahimYousif2019,
  author  = {Ibrahim, Yasser and Yousif, Mohamed},
  title   = {Dual-square-free modules},
  journal = {Communications in Algebra},
  volume  = {47},
  number  = {7},
  pages   = {2954--2966},
  year    = {2019},
  doi     = {10.1080/00927872.2018.1543429},
  note    = {\url{https://doi.org/10.1080/00927872.2018.1543429}}
}

@article{IbrahimYousif2022,
  author  = {Ibrahim, Yasser and Yousif, Mohamed},
  title   = {{$C4$}-modules with the exchange property},
  journal = {Communications in Algebra},
  volume  = {50},
  number  = {12},
  pages   = {5435--5443},
  year    = {2022},
  doi     = {10.1080/00927872.2022.2085289},
  note    = {\url{https://doi.org/10.1080/00927872.2022.2085289}}
}

@article{IbrahimYousifPerspective2022,
  author  = {Ibrahim, Yasser and Yousif, Mohamed},
  title   = {Perspectivity, exchange and summand-dual-square-free modules},
  journal = {Communications in Algebra},
  volume  = {50},
  number  = {6},
  pages   = {2488--2506},
  year    = {2022},
  doi     = {10.1080/00927872.2021.2008414},
  note    = {\url{https://doi.org/10.1080/00927872.2021.2008414}}
}

@article{IbrahimYousifDirectComplements2023,
  author  = {Ibrahim, Yasser and Yousif, Mohamed},
  title   = {Direct complements almost unique},
  journal = {Journal of Algebra and Its Applications},
  volume  = {22},
  number  = {12},
  pages   = {2350260},
  year    = {2023},
  doi     = {10.1142/S0219498823502602},
  note    = {\url{https://doi.org/10.1142/S0219498823502602}}
}

@article{KosanQuynhSrivastava2016,
  author  = {Ko{\c s}an, M. Tamer and Quynh, Truong Cong and Srivastava, Ashish K.},
  title   = {Rings with each right ideal automorphism-invariant},
  journal = {Journal of Pure and Applied Algebra},
  volume  = {220},
  number  = {4},
  pages   = {1525--1537},
  year    = {2016},
  doi     = {10.1016/j.jpaa.2015.09.016},
  note    = {\url{https://doi.org/10.1016/j.jpaa.2015.09.016}}
}

@book{KnusMerkurjevRostTignol1998,
  author    = {Knus, Max-Albert and Merkurjev, Alexander and Rost, Markus and Tignol, Jean-Pierre},
  title     = {The Book of Involutions},
  series    = {American Mathematical Society Colloquium Publications},
  volume    = {44},
  publisher = {American Mathematical Society},
  address   = {Providence, RI},
  year      = {1998},
  doi       = {10.1090/coll/044},
  note      = {\url{https://doi.org/10.1090/coll/044}}
}

@book{Lam1999,
  author    = {Lam, T. Y.},
  title     = {Lectures on Modules and Rings},
  series    = {Graduate Texts in Mathematics},
  volume    = {189},
  publisher = {Springer},
  address   = {New York},
  year      = {1999},
  doi       = {10.1007/978-1-4612-0525-8},
  note      = {\url{https://doi.org/10.1007/978-1-4612-0525-8}}
}

@book{Lam2001,
  author    = {Lam, T. Y.},
  title     = {A First Course in Noncommutative Rings},
  edition   = {Second},
  series    = {Graduate Texts in Mathematics},
  volume    = {131},
  publisher = {Springer},
  address   = {New York},
  year      = {2001},
  doi       = {10.1007/978-1-4419-8616-0},
  note      = {\url{https://doi.org/10.1007/978-1-4419-8616-0}}
}

@article{MazurekNielsenZiembowski2015,
  author  = {Mazurek, Ryszard and Nielsen, Pace P. and Ziembowski, Micha{\l}},
  title   = {Commuting idempotents, square-free modules, and the exchange property},
  journal = {Journal of Algebra},
  volume  = {444},
  pages   = {52--80},
  year    = {2015},
  doi     = {10.1016/j.jalgebra.2015.07.015},
  note    = {\url{https://doi.org/10.1016/j.jalgebra.2015.07.015}}
}

@book{MohamedMuller1990,
  author    = {Mohamed, Saad H. and M{\"u}ller, Bruno J.},
  title     = {Continuous and Discrete Modules},
  series    = {London Mathematical Society Lecture Note Series},
  volume    = {147},
  publisher = {Cambridge University Press},
  address   = {Cambridge},
  year      = {1990},
  doi       = {10.1017/CBO9780511600692},
  note      = {\url{https://doi.org/10.1017/CBO9780511600692}}
}

@book{McConnellRobson2001,
  author    = {McConnell, J. C. and Robson, J. C.},
  title     = {Noncommutative Noetherian Rings},
  edition   = {Revised},
  series    = {Graduate Studies in Mathematics},
  volume    = {30},
  publisher = {American Mathematical Society},
  address   = {Providence, RI},
  year      = {2001},
  doi       = {10.1090/gsm/030},
  note      = {\url{https://doi.org/10.1090/gsm/030}}
}

@article{Nielsen2010,
  author  = {Nielsen, Pace P.},
  title   = {Square-free modules with the exchange property},
  journal = {Journal of Algebra},
  volume  = {323},
  number  = {7},
  pages   = {1993--2001},
  year    = {2010},
  doi     = {10.1016/j.jalgebra.2009.12.035},
  note    = {\url{https://doi.org/10.1016/j.jalgebra.2009.12.035}}
}

@article{Nicholson1977,
  author  = {Nicholson, W. Keith},
  title   = {Lifting idempotents and exchange rings},
  journal = {Transactions of the American Mathematical Society},
  volume  = {229},
  pages   = {269--278},
  year    = {1977},
  doi     = {10.1090/S0002-9947-1977-0439876-2},
  note    = {\url{https://doi.org/10.1090/S0002-9947-1977-0439876-2}}
}

@article{Ore1933,
  author  = {Ore, {\O}ystein},
  title   = {Theory of non-commutative polynomials},
  journal = {Annals of Mathematics},
  series  = {2},
  volume  = {34},
  number  = {3},
  pages   = {480--508},
  year    = {1933},
  doi     = {10.2307/1968173},
  note    = {\url{https://doi.org/10.2307/1968173}}
}

@book{Pierce1982,
  author    = {Pierce, Richard S.},
  title     = {Associative Algebras},
  series    = {Graduate Texts in Mathematics},
  volume    = {88},
  publisher = {Springer},
  address   = {New York},
  year      = {1982},
  doi       = {10.1007/978-1-4757-0163-0},
  note      = {\url{https://doi.org/10.1007/978-1-4757-0163-0}}
}

@article{Warfield1972,
  author  = {Warfield, Jr., Robert B.},
  title   = {Exchange rings and decompositions of modules},
  journal = {Mathematische Annalen},
  volume  = {199},
  pages   = {31--36},
  year    = {1972},
  note    = {\url{https://eudml.org/doc/162311}}
}

@article{YousifAminIbrahim2014,
  author  = {Yousif, Mohamed and Amin, Ismail and Ibrahim, Yasser},
  title   = {{$D3$}-Modules},
  journal = {Communications in Algebra},
  volume  = {42},
  number  = {2},
  pages   = {578--592},
  year    = {2014},
  doi     = {10.1080/00927872.2012.718823},
  note    = {\url{https://doi.org/10.1080/00927872.2012.718823}}
}

@article{Zhu2023,
  author  = {Zhu, Zhanmin},
  title   = {Generalizations of {$C3$} modules and {$C4$} modules},
  journal = {Mathematical Reports},
  volume  = {25(75)},
  number  = {1},
  pages   = {187--197},
  year    = {2023},
  note    = {\url{https://imar.ro/journals/Mathematical_Reports/php/2023/Mrc23_1.php}}
}

\end{document}